\newtheorem{theorem}{Theorem}
\theoremstyle{plain}
\newtheorem{lemma}{Lemma}
\newtheorem{proposition}{Proposition}
\newtheorem{remark}{Remark}
\numberwithin{equation}{section}
\begin{document}
\title[Glivenko-Cantelli classes]{On the Glivenko-Cantelli theorem for real-valued empirical functions of stationary $\alpha$-mixing and $\beta$-mixing sequences}


\author{Ousmane Coulibaly$^{1}$}
\author{Harouna Sangar\'e$^{2}$}


\begin{abstract}  
In this paper we extend the classical Glivenko-Cantelli theorem to real-valued empirical functions under dependence structures characterised by $\alpha$-mixing and $\beta$-mixing conditions. We investigate sufficient conditions ensuring that families of real-valued functions exhibit the Glivenko-Cantelli (GC) property in these dependence settings. Our analysis focuses on function classes satisfying uniform entropy conditions and establishes deviation bounds under mixing coefficients that decay at appropriate rates. Our results refine the existing literature by relaxing the independence assumptions and highlighting the role of dependence in empirical process convergence.

\bigskip
\noindent $^{1}$ Msc in Probabilties and Statistics\\
\noindent Universit\'e des Sciences, des Techniques et des Technologies de Bamako (USTT-B), Mali\\
\noindent Personal email : ocouwi@gmail.com\\

\noindent $^{2}$ DER MI, Facult\'e des Sciences et Techniques (FST), USTT-B, Mali \\
\noindent LERSTAD, Gaston Berger University, Saint-Louis, S\'en\'egal.\newline
Researcher at IMHO [\url{https://imhotepsciences.org}]\\
Imhotep International Mathematical Center (imh-imc), \url{https://imhotepsciences.org}\\
\noindent Institutional emails : harouna.sangare@usttb.edu.ml, harouna.sangare@mesrs.ml, \\ sangare.harouna@ugb.edu.sn\\
\noindent Personal email : harounasangareusttb@gmail.com\\

\noindent\textbf{Keywords}. $\alpha$-mixing, $\beta$-mixing, Empirical process, Stationarity, Entropy number, Law of large numbers, Uniform convergence, Glivenko-Cantelli class\\
\textbf{AMS 2020 Mathematics Subject Classification:} 62G20; 62G30; 60F99; 60G10\\

\end{abstract}

\maketitle

\section{Introduction} \label{sec01}

\noindent Empirical process theory plays a crucial role in probability and statistics, providing tools for analyzing the convergence properties of empirical measures and function classes. The classical Glivenko-Cantelli theorem asserts that the empirical distribution function converges uniformly to the true distribution function for independent and identically distributed (iid) samples. However, real-world data often exhibit dependence, necessitating an extension of this theory to dependent structures.\\

\noindent In this paper, we study the uniform convergence properties of real-valued empirical functions under $\alpha$-mixing and $\beta$-mixing conditions. These mixing conditions describe dependence structures commonly encountered in time series analysis, stochastic processes, and machine learning. Our goal is to establish sufficient conditions for the Glivenko-Cantelli property in these settings and to derive implications for statistical applications.\\

\noindent \textbf{Preliminaries and Definitions.}
Let $\lbrace X,(X_n)_{n\geq 1}\rbrace$ be a sequence of real-valued random variables (rrv's) defined on the same probability space $(\Omega,\mathcal{A},\mathbb{P})$ and associated to the same cumulative distribution function (cdf) $F$. Let $\mathcal{L}_2(\mathbb{P}_X)$ be the class of all measurable functions $f: \mathbb{R}\rightarrow \mathbb{R}$ with $\mathbb{E}f(X)^2<+\infty$. Let us define the functional empirical probability based on the $n$ first observations, $n\geq 1$, by

\begin{equation}\label{eq01}
\mathbb{P}_n(f)=\frac{1}{n}\sum_{i=1}^{n}f(X_i), \ f\in \mathcal{L}_2,
\end{equation}

\bigskip\noindent which will be centered at the mathematical expectations

\begin{equation*}
\mathbb{E}\mathbb{P}_n(f)=\mathbb{P}_X(f), \ f\in \mathcal{L}_2.
\end{equation*}

\bigskip\noindent In this paper, we are interested by functional Glivenko-Cantelli classes, that is classes $\mathcal{F}\subset \mathcal{L}_2$ for which we have

\begin{equation*}
	\sup_{f\in \mathcal{F}}|\mathbb{P}_n(f)-\mathbb{E}\mathbb{P}_n(f)|=\|\mathbb{P}_n-\mathbb{E}\mathbb{P}_n\|_{\mathcal{F}}\rightarrow 0,\ \text{almost surely, as } n\rightarrow +\infty.
\end{equation*}

\bigskip\noindent We might also consider a class $\mathcal{C}$ of measurable sets in $\mathbb{R}$ and define

\begin{equation}\label{eq02}
\mathbb{P}_n(C)=\frac{1}{n}\sum_{i=1}^{n}Card\lbrace X_i\in C\rbrace, \ C\in \mathcal{C},
\end{equation}

\bigskip\noindent with

\begin{equation*}
\mathbb{E}\mathbb{P}_n(C)=\mathbb{P}_X(C)=\mathbb{P}(X\in C).
\end{equation*}

\bigskip\noindent Taking $\mathcal{F}_c=\lbrace \mathbb{I}_{\rbrack-\infty,x\rbrack}, x\in \mathbb{R}\rbrace$ in Definition \eqref{eq01} or $\mathcal{C}_c=\lbrace \rbrack-\infty,x\rbrack, x\in \mathbb{R}\rbrace$ in Definition \eqref{eq02}, leads to the classical empirical function

\begin{equation*}
F_n(x)=\frac{1}{n}Card\lbrace i\in \lbrack 1,n\rbrack, X_i\leq x\rbrace, \ x\in \mathbb{R},
\end{equation*}

\bigskip\noindent which in turn gives the Glivenko-Cantelli law for independent and identically distributed (\textit{iid}) sequences $(X_n)_{n\geq 1}$ under the form :

\begin{equation}\label{eq03}
\sup_{x\in \mathbb{R}}|F_n(x)-F(x)|\rightarrow 0,\ \text{almost surely, as } n\rightarrow +\infty.
\end{equation}

\bigskip\noindent Such a result, also known as the fundamental theorem of statistics, is the frequentist paradigm (as opposed to the Bayesian paradigm) in statistics. In the form of \eqref{eq03}, the Glivenko-Cantelli law has gone through a large number of studies for a variety of types of dependence. It has also been used extensively in statistical theory, both for finding asymptotically efficient estimators and for related statistical tests based on the Donsker theorem, which we will not study here.\\

\noindent To give some examples, we cite the following results: Billingsley (1968) \cite{billingsley} showed the convergence in law on $D\lbrack 0,1\rbrack$ of the empirical process for $\phi$-mixing rrv's under the condition $\sum_{k>0}k^2\sqrt{\phi(k)}<+\infty$. Yoshihara (1975) \cite{yosh} obtained the same result for $\alpha$ mixing under the condition on the mixing coefficient $\alpha_n=O(n^{-a})$ with $a>3$. This result was first improved by Shao (1995) \cite{shao} by assuming only $a>2$, then by Shao and Yu (1996) \cite{shaoyu} by assuming $a>1+\sqrt{2}$. Rio (2017) \cite{rio} obtained an optimal condition $a>1$. Similar results are given by Shao and Yu (1996) \cite{shaoyu} for $\rho$-mixing rrv's, Doukhan \textit{et al.} (1995) \cite{doukh} for $\beta$-mixing rrv's. Next, Shao and Yu (1996) \cite{shaoyu} weakened the covariance condition $a>\frac{3+\sqrt{33}}{2}$. Louhichi (2000) \cite{louh} gave another proof and an improvement with $a>4$. Sangar\'e \textit{et al.} (2020) \cite{sanglotraore} gave a functional Glivenko-Cantelli theorem for $\phi$ mixing rrv's under the mixing coefficient condition $\phi_n=O(n^{-\frac{4}{1-\delta}})$ with $0<\delta<1$.

\bigskip\noindent However, functional versions do not seem to have been developed for dependent data, although they are much more interesting than restricting to the particular case of $\mathcal{F}_c$. Moreover, the functional version has the advantage of being linear in the sense that

\begin{equation*}
\forall (a,b)\in \mathbb{R}^2,\ \forall (f,g)\in (\mathcal{L}_1(\mathbb{P}_X))^2,\ \mathbb{P}_n(af+bg)=a\mathbb{P}_n(f)+b\mathbb{P}_n(g),
\end{equation*}

\bigskip\noindent which allows the use of more mathematical latitudes.\\

\noindent In this paper, we use general Glivenko-Cantelli classes established by Sangar\'e \textit{et al.} (2020) \cite{sanglotraore}, for real-valued empirical functions of dependent data and particularize them for $\alpha$-mixing and $\beta$-mixing random variables to obtain GC classes.\\
 
\noindent The main results are Theorem \ref{thg-gc-ef-01}, as a general GC theorem for arbitrary stationary sequences, Proposition \ref{prop01}, a Glivenko-Cantelli law for the real-valued empirical functions of stationary $\alpha$-mixing sequences, and Proposition \ref{prop02}, a Glivenko-Cantelli law for the real-valued empirical functions of stationary $\beta$-mixing sequences.\\

\noindent This functional approach requires the use of concentration numbers, which we define in the next section. We will also need to make a number of calls to dependency types such as $\alpha$-mixing and $\beta$-mixing, and other tools.\\

\noindent This leads us to the following organisation of the paper. In section \ref{sec04}, we recall the general strong law of large numbers of Sangar\'e and Lo (2015) \cite{sanglo}, which is crucial for the proofs of Sangar\'e \textit{et al.} (2020) \cite{sanglotraore} general GC classes used in our results, and to give a brief introduction to the notions of $\alpha$-mixing and $\beta$-mixing. But before that, section \ref{sec02} is devoted to entropy numbers and Vapnik-\v{C}ervonenkis classes. In section \ref{sec06}, we give our GC class regarding real-valued empirical functions for arbitrary stationary rv's sequences and particularize it for stationary $\alpha$-mixing and $\beta$-mixing rrv's sequences. The paper ends with Section \ref{sec07} which deals with conclusion and perspectives. 

\section{Entropy numbers and Vapnik-\v{C}ervonenkis} \label{sec02}

\noindent The following reminder about entropy numbers, which comes from combinatorial theory, might be perceived as making the paper heavier. However, we believe that it may help the reader who is not familiar with such techniques. Let $(E,\|\circ\|,\leq)$ be an ordered real normed space, which means that the order is compatible with the operations in the following sense

\begin{equation*}
\forall(\lambda,z)\in \mathbb{R}_+\vee\lbrace 0\rbrace\times E,\ \forall(x,y)\in E^2, (x<y)\rightarrow ((x+z<y+z) \ \text{ and } \ (\lambda x<\lambda y)).
\end{equation*}

\bigskip\noindent A bracket set of level $\varepsilon>0$ in $E$ is any set of the form

\begin{equation*}
	B(\ell_1, \ell_2,\varepsilon)=\lbrace x\in E, \ell_1\leq x\leq \ell_2\rbrace, \ \ell_1\leq \ell_2\in E \ \text{ and } \ \|\ell_2-\ell_1\|<\varepsilon.
\end{equation*}

\bigskip\noindent For any subset $F$ of $E$, the bracketing entropy number at level $\varepsilon>0$, denoted $N_{(F,\|\circ\|,\varepsilon)}$ is the minimum of the numbers $p\geq 1$ for which we have $p$ bracket sets (or simply $p$ brackets) at level $\varepsilon$ covering $F$, where the $\ell_i$'s do not necessarily belong to $F$. The bracketing entropy number is closely related to the Vapnick-\v{C}ervonenkis index (VC-index) of a Vapnick-\v{C}ervonenkis set or class (VC-set or VC-class).\\

\noindent To define a VC-set, we need to recall some definitions. A subset $D$ of $B\subset E$ is picked out by a subclass $\mathcal{C}$ of the power set $\mathcal{P}(E)$ from $B$ if and only if $D$ is element of $\lbrace B\cap C, C\in \mathcal{C}\rbrace$. Next, $B$ is shattered by $\mathcal{C}$ if and only if all subsets of $B$ are picked out by $\mathcal{C}$ from $B$. Finally, the class $\mathcal{C}$ is a VC-class if and only if there exists an integer $n\geq 1$ such that no set of cardinality $n$ is shattered by $\mathcal{C}$. The minimum of those numbers $n$ minus one is the index of that VC-class. The VC-class is the most quick way to bound bracketing entropy numbers $N_{(F,\|\circ\|,\varepsilon)}$, as stated in \cite{vdv} : If $\mathcal{C}$ is a VC-class of index $I(\mathcal{C})$, then

\begin{equation}
N_{(F,\|\circ\|,\varepsilon)}\leq KI(\mathcal{C})(4e)^{I(\mathcal{C})}(1/\varepsilon)^{rI(\mathcal{C}-1)},
\end{equation}

\bigskip\noindent where $K>0$ and $r>1$ are universal constants.\\

\noindent Finally, for a class $\mathcal{F}$ of real-valued functions $f : E\rightarrow \mathbb{R}$, we may define the bracketing entropy number associated with $\mathcal{F}$ to be the bracketing entropy number of the class $\mathcal{C}(\mathcal{F})$ of sub-graphs $f$ of elements $f\in \mathcal{F}$

\begin{equation*}
S_g(f)=\lbrace (x,t)\in E\times \mathbb{R}, f(x)>t\rbrace,
\end{equation*}

\bigskip\noindent in $E^\ast=E\times \mathbb{R}$ endowed with the product norm which is still a normed space. We denote by $N_{(F,\|\circ\|,\varepsilon)}$ to distinguish with the bracketing number using the norm of the functions $f\in \mathcal{F}$. Also, $\mathcal{F}$ is a VC-supg-class if and only if $\mathcal{C}(\mathcal{F})$ is a VC-class and $I(\mathcal{F})=I\mathcal{C}(\mathcal{F})$. We will use such entropy numbers to formulate VC-classes.\bigskip

\bigskip\noindent In the next section, we describe the main tools that we used in our results and provide a brief reminder of the concepts of $\alpha$ mixing and $\beta$ mixing.\\

\section{The Sangar\'e-Lo SLNN, The Sangar\'e-Lo-Traor\'e general functional Glivenko-Cantelli classes, $\alpha$-mixing and $\beta$-mixing concept} \label{sec04}

\noindent In this section, we have a look at the tools at the heart of our findings.

\subsection{General strong law of large numbers}
\noindent We begin with this useful result proved by \cite{sanglo}. 

\begin{theorem}(\cite{sanglo}). \label{lemmasanglo}
Let $X_1,X_2,\ldots$ be an arbitrary sequence of rv's, and let $(f_{i,n})_{i\geq 1}$ be a sequence of measurable functions such that $\mathbb{V}ar\lbrack f_{i,n}(X_i)\rbrack<+\infty$, for $i\geq 1$ and $n\geq 1$. If for some $\delta$, $0<\delta<3$

\begin{equation}
C_1=\sup_{n\geq 1}\sup_{q\geq 1}\mathbb{V}ar\left(\frac{1}{q^{(3-\delta)/4}}\sum_{i=1}^{q}f_{i,n}(X_i)\right)<+\infty \label{ghc_01}
\end{equation}

\bigskip\noindent and for some $0<\delta<3$,

\begin{equation}
C_2=\sup_{n\geq 0}\sup_{k\geq 0}\sup_{q:q^2+1\leq k\leq(q+1)^2 }\sup_{k\leq j\leq(q+1)^2 }\mathbb{V}ar\left(\frac{1}{q^{(3-\delta)/2}}\sum_{i=1}^{j-q^2+1}f_{q^2+i,n}(X_{q^2+i})\right)<+\infty \label{ghc_02}
\end{equation}

\bigskip\noindent hold, then

\begin{equation*}
\frac{1}{n}\sum_{i=1}^{n}\left(f_{i,n}(X_i)-\mathbb{E}(f_{i,n}(X_i))\right)\rightarrow 0, \text{almost surely, as} \ n\rightarrow +\infty.
\end{equation*}
\end{theorem}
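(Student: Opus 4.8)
The plan is to run a blocking argument along the subsequence of perfect squares, letting condition \eqref{ghc_01} govern the behaviour at the block endpoints $n=q^2$ and condition \eqref{ghc_02} govern the oscillations inside each block $q^2\le n<(q+1)^2$. Throughout, write $Y_{i,n}=f_{i,n}(X_i)-\mathbb{E}f_{i,n}(X_i)$, so that each $Y_{i,n}$ is centred with finite variance and the goal reads $n^{-1}\sum_{i=1}^{n}Y_{i,n}\to 0$ almost surely. Both hypotheses are uniform in the row index $n$, which is what will let me apply the variance bounds at whichever row and whichever endpoint arises in the blocking; the exponents are deliberately matched, since $(3-\delta)/2=2\cdot(3-\delta)/4$, so that a full sum of length $q^2$ and a block sum of length $\sim q$ both carry variance of order $q^{3-\delta}$.

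First I would handle the corners. Applying \eqref{ghc_01} with summation length $q^2$ and row index $q^2$ gives $\mathbb{V}ar\big(\sum_{i=1}^{q^2}Y_{i,q^2}\big)\le C_1\,(q^2)^{(3-\delta)/2}=C_1\,q^{3-\delta}$, so Chebyshev's inequality yields
\begin{equation*}
\mathbb{P}\left(\left|\frac{1}{q^2}\sum_{i=1}^{q^2}Y_{i,q^2}\right|>\varepsilon\right)\le \frac{C_1\,q^{3-\delta}}{\varepsilon^2\,q^{4}}=\frac{C_1}{\varepsilon^2}\,q^{-1-\delta}.
\end{equation*}
Since $\delta>0$, the series $\sum_{q\ge1}q^{-1-\delta}$ converges, and the Borel--Cantelli lemma delivers $q^{-2}\sum_{i=1}^{q^2}Y_{i,q^2}\to 0$ almost surely along the squares.

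Next I would control the oscillation inside a block. For $q^2\le n<(q+1)^2$ I split $\sum_{i=1}^{n}Y_{i,n}$ into the head $\sum_{i=1}^{q^2}Y_{i,n}$ and the tail block $\sum_{i=q^2+1}^{n}Y_{i,n}$; the tail is exactly the object bounded by \eqref{ghc_02}, whose variance is at most $C_2\,q^{3-\delta}$ for every endpoint and every row, while the head is handled by \eqref{ghc_01} at length $q^2$ together with the trivial factor $q^2/n\le 1$. The decisive step — and the one I expect to be the main obstacle — is to pass from these \emph{pointwise} variance bounds to an almost-surely summable control of the \emph{maximum} over the $\sim 2q$ indices of the block. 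A crude union bound costs a factor $\sim q$ and only gives the summable exponent $q^{-\delta}$ when $\delta>1$, which is strictly weaker than the admissible range $0<\delta<3$; moreover, in this triangular-array setting the summand $Y_{i,n}$ itself changes with $n$, so increasing the endpoint also changes the row, and an ordinary partial-sum maximal inequality does not apply verbatim. This is precisely why \eqref{ghc_02} takes a double supremum over the endpoint $j$ and the row $n$ simultaneously: it supplies a bound valid uniformly over all (endpoint, row) pairs in a block, which I would feed into a Rademacher--Menshov/M\'oricz-type maximal inequality. That trades the factor $q$ for a factor $(\log q)^2$ and brings the block oscillation down to order $(\log q)^2\,q^{-1-\delta}$, which is summable for every $\delta>0$; reconciling the co-variation of row and endpoint through this joint uniformity is the real technical heart of the argument.

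Finally I would assemble the pieces. Any $n$ lies in a unique block $q^2\le n<(q+1)^2$, and on that block $n^{-1}\sum_{i=1}^{n}Y_{i,n}$ is dominated by the corner term and the block oscillation, both of which vanish almost surely as $q\to\infty$ by the two Borel--Cantelli estimates above. The whole proof thus reduces to these two estimates glued by the maximal inequality, with the exponent bookkeeping $(3-\delta)/4$ versus $(3-\delta)/2$ calibrated exactly so that both land on the summable order $q^{-1-\delta}$, and with the uniformity of \eqref{ghc_01}--\eqref{ghc_02} in the row index $n$ keeping everything valid when the summands depend on $n$.
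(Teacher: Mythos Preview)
The paper does not supply its own proof of this theorem: it is quoted from Sangar\'e and Lo (2015) and used as a black box for the subsequent Glivenko--Cantelli results, so there is no in-paper argument against which to compare your proposal.

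On its own merits, your blocking strategy along the squares is the natural route, and the variance--Chebyshev--Borel--Cantelli computation at the corners $n=q^{2}$ is correct. You are also right that the crux is the almost-sure control of the supremum over the $\sim 2q$ indices $n$ inside a block, and that the crude union bound only yields the range $\delta>1$. Where your outline has a genuine gap is the proposed fix via a Rademacher--Menshov/M\'oricz maximal inequality. Those inequalities bound the maximum of \emph{partial sums of a single fixed sequence}; here, because the summands $Y_{i,n}$ depend on the row $n$, the head sums $\sum_{i\le q^{2}}Y_{i,n}$ for different $n$ in the same block are not nested partial sums of anything, and likewise the tails $\sum_{q^{2}<i\le n}Y_{i,n}$ change both their length and their entries as $n$ varies. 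Uniformity of the variance bound in $n$, which is all that \eqref{ghc_01}--\eqref{ghc_02} provide, gives the \emph{same} second-moment control for each row but does not by itself produce a logarithmic maximal inequality \emph{across} rows; absent further structure you are back to the union bound and to $\delta>1$.

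In the special case actually used throughout the paper --- $f_{i,n}\equiv f$ independent of both indices --- the head is simply $S_{q^{2}}$ (independent of $n$) and the tails $S_{n}-S_{q^{2}}$ are genuine partial sums of a single sequence, so M\'oricz applies verbatim and your outline goes through for the full range $0<\delta<3$. For the general triangular-array statement you would need either to consult the original argument in \cite{sanglo} or to supply an additional device that handles the maximum over rows.
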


\begin{remark}
We say that the sequence $X_{1}, X_{2},\cdots, X_{n}$ satisfies the \textbf{(GCIP)} whenever Conditions \eqref{ghc_01} and \eqref{ghc_02} hold, and we denote $\mathcal{C}$, the class of measurable functions for which \textbf{(GCIP)} holds.
\end{remark} 

\bigskip\noindent In the next subsection, we recall general functional Glivenko-Cantelli classes due to \cite{sanglotraore}. These results will be use in Section \ref{sec06} to find GC-classes for real-valued empirical function under $\alpha$-mixing and $\beta$-mixing conditions.\\

\subsection{General functional GC-classes}

\noindent Let us begin by giving a slight different version of Theorem (2.4.1 in \cite{vdv}, page 122), established by \cite{sanglotraore}.

\begin{theorem}(\cite{sanglotraore})\label{hgc_theo} Let $X, X_{1}, X_{2}, \ldots$ be an arbitrary stationary sequence of rrv's with common \textit{cdf} $F$.\\
	
	\noindent a) Let $\mathcal{C}$ be a class of measurable set such that \\
	
	\noindent (a1) for any $\varepsilon > 0$, $N_{[]}\left(\mathcal{C}, \left| \circ \right|, \varepsilon \right) < + \infty$,\\
	
	\noindent (a2) for any $C \in \mathcal{C}$, $\mathbb{P}_{n} \left(C\right) \longrightarrow \mathbb{P}_{X}\left(C\right)$, as $n \longrightarrow +\infty$.

	\bigskip \noindent Then $\mathcal{C}$ is GC-class, that is
	
	\begin{equation}\label{hgc_02}
		\lim_{n \to +\infty} \sup_{C \in \mathcal{C}}\left|\mathbb{P}_{n}\left(C\right) - \mathbb{P}_{X} \left( C \right) \right|=0.
	\end{equation}
	
	\bigskip \noindent b) Let $\mathcal{F}$ be a class of measurable set such that\\
	
	\noindent (b1) for any $\varepsilon > 0$, $N_{[]} \left( \mathcal{F}, \| \circ \|_{\mathcal{L}_{2}\left(\mathbb{P}_{X} \right)}, \varepsilon \right) < +\infty$,\\
	
	\noindent (b2) for any $f \in \mathcal{F}$, $\mathbb{P}_{n}\left(f\right) \longrightarrow \mathbb{P}_{X}\left(f\right)$, as $n \to + \infty$.
	
	\bigskip \noindent Then $\mathcal{F}$ is GC-class, that is
	
	\begin{equation}\label{hgc_03}
		\lim_{n \to +\infty} \sup_{f \in \mathcal{F}}\left| \mathbb{P}_{n} \left( f\right) - \mathbb{P}_{X} \left( f \right) \right|=0.
	\end{equation}
\end{theorem}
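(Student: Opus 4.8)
The plan is to prove part (b) by the classical bracketing reduction (no chaining is needed) and then to obtain part (a) as the special case of indicator brackets. Fix $\varepsilon>0$. By hypothesis (b1) the number $N_{[]}(\mathcal{F},\|\circ\|_{\mathcal{L}_2(\mathbb{P}_X)},\varepsilon)$ is finite, so there exist finitely many brackets $[\ell_1,u_1],\ldots,[\ell_p,u_p]$, with $\|u_j-\ell_j\|_{\mathcal{L}_2(\mathbb{P}_X)}<\varepsilon$ for each $j$, whose union covers $\mathcal{F}$. Thus every $f\in\mathcal{F}$ satisfies $\ell_j\le f\le u_j$ for some index $j=j(f)$.

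The next step is to turn this sandwich into a one-sided control of the fluctuation. Since $\ell_j\le f\le u_j$ and $\mathbb{P}_n,\mathbb{P}_X$ are monotone positive linear functionals, I would write
\begin{equation*}
\mathbb{P}_n(f)-\mathbb{P}_X(f)\le \mathbb{P}_n(u_j)-\mathbb{P}_X(f)=\bigl(\mathbb{P}_n(u_j)-\mathbb{P}_X(u_j)\bigr)+\mathbb{P}_X(u_j-f),
\end{equation*}
and symmetrically
\begin{equation*}
\mathbb{P}_n(f)-\mathbb{P}_X(f)\ge \bigl(\mathbb{P}_n(\ell_j)-\mathbb{P}_X(\ell_j)\bigr)-\mathbb{P}_X(f-\ell_j).
\end{equation*}
Because $0\le u_j-f\le u_j-\ell_j$ and $\mathbb{P}_X$ is a probability measure, the Cauchy--Schwarz (equivalently Jensen) inequality gives $\mathbb{P}_X(u_j-f)\le\|u_j-\ell_j\|_{\mathcal{L}_1(\mathbb{P}_X)}\le\|u_j-\ell_j\|_{\mathcal{L}_2(\mathbb{P}_X)}<\varepsilon$, and likewise for $\mathbb{P}_X(f-\ell_j)$. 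Taking the supremum over $f\in\mathcal{F}$ collapses the problem to the finite family of endpoints:
\begin{equation*}
\sup_{f\in\mathcal{F}}\bigl|\mathbb{P}_n(f)-\mathbb{P}_X(f)\bigr|\le \max_{1\le j\le p}\Bigl(\bigl|\mathbb{P}_n(u_j)-\mathbb{P}_X(u_j)\bigr|\vee\bigl|\mathbb{P}_n(\ell_j)-\mathbb{P}_X(\ell_j)\bigr|\Bigr)+\varepsilon.
\end{equation*}

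It then remains to let $n\to+\infty$. Each of the finitely many endpoint sequences $\mathbb{P}_n(u_j)-\mathbb{P}_X(u_j)$ and $\mathbb{P}_n(\ell_j)-\mathbb{P}_X(\ell_j)$ tends to $0$ almost surely, and a maximum of finitely many almost surely null sequences is almost surely null, so $\limsup_n \sup_{f\in\mathcal{F}}|\mathbb{P}_n(f)-\mathbb{P}_X(f)|\le\varepsilon$ on an event of probability one. Running $\varepsilon$ through a sequence $\varepsilon_m\downarrow 0$ produces countably many probability-one events whose intersection still has probability one, and on it the left-hand side is $0$; this yields \eqref{hgc_03}. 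Part (a) is handled identically after replacing each functional bracket by a set bracket $C_1\subseteq C\subseteq C_2$ with $\mathbb{P}_X(C_2\setminus C_1)<\varepsilon$, using the sandwich $\mathbb{I}_{C_1}\le\mathbb{I}_{C}\le\mathbb{I}_{C_2}$ and hypotheses (a1)--(a2); here the $L_1$ bound $\mathbb{P}_X(C_2\setminus C_1)<\varepsilon$ is immediate and no Cauchy--Schwarz step is required.

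The step I expect to be the genuine obstacle is the almost sure convergence of the bracket endpoints, $\mathbb{P}_n(u_j)-\mathbb{P}_X(u_j)\to 0$, since the $u_j,\ell_j$ need not lie in $\mathcal{F}$ and hence are not directly covered by (b2). The clean resolution is to apply the general strong law of large numbers, Theorem \ref{lemmasanglo}, to each fixed endpoint: taking $f_{i,n}\equiv u_j$ (respectively $\ell_j$), one must verify that the variance conditions \eqref{ghc_01}--\eqref{ghc_02}, that is the (GCIP), hold for these functions, which is exactly where the stationarity of the sequence and the finiteness of the second moments enter. Once each endpoint is known to obey the pointwise law, the remainder is the soft bracketing bookkeeping displayed above.
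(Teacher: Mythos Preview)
Your bracketing reduction is the standard route and matches the argument behind Theorem~2.4.1 in \cite{vdv}, which the paper explicitly names as the model for this statement. The paper itself does not supply a proof of Theorem~\ref{hgc_theo}; it is quoted from \cite{sanglotraore}, so there is nothing in the present text to compare against beyond noting that your approach is the expected one.

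You have correctly located the only genuine difficulty: the bracket endpoints $\ell_j,u_j$ need not lie in $\mathcal{F}$, so hypothesis (b2) does not by itself yield $\mathbb{P}_n(u_j)\to\mathbb{P}_X(u_j)$. Your proposed resolution, however, does not close the gap. Stationarity together with square-integrability does \emph{not} imply the (GCIP) conditions \eqref{ghc_01}--\eqref{ghc_02}: those demand $\mathbb{V}ar\bigl(\sum_{i=1}^{q}f(X_i)\bigr)=O(q^{(3-\delta)/2})$ for some $\delta\in(0,3)$, hence growth strictly slower than $q^{3/2}$, whereas a stationary $L^2$ sequence with slowly decaying correlations (e.g.\ a long-range dependent Gaussian sequence with $\mathbb{C}ov(X_0,X_k)\sim k^{-\gamma}$, $\gamma\in(0,1)$) has $\mathbb{V}ar\bigl(\sum_{i=1}^{q}X_i\bigr)\asymp q^{2-\gamma}$, which is too large. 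So Theorem~\ref{lemmasanglo} cannot be invoked on the endpoints from the hypotheses of Theorem~\ref{hgc_theo} alone. What is actually needed is either that the brackets be chosen with endpoints for which the pointwise strong law is separately known to hold, or that (b2) be understood to extend to the bracket endpoints. This is precisely why the paper immediately passes to the ``applicable'' version, Theorem~\ref{hgc_theoApp}, in which (b2) is replaced by the explicit (GCIP) variance bounds; those bounds are an additional structural hypothesis on the dependence, not a consequence of stationarity and finite second moments.
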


\begin{remark}
	If we apply Part (a) to $\mathcal{C}_c$, Condition (a1) can be dropped since it is VC-class of index 2. Indeed, for 
	$A=\{x_1, \ x_2, x_3\}$ with $x<x_2<x_3$. The subset $\{x_2\}$ cannot be picked out by $\mathcal{C}_c$ from $A$ since, for any $x\in \mathbb{R}$,  
	$]-\infty, x]\cap A$ will be of on the five sets : $\emptyset$ (for $x\leq x_1$), $\{x_1\}$ (for $x_1<x\leq x_2$), $\{x_1, x_2\}$ (for $x_2<x\leq x_3$) and $A$ (for $x>x_3$).\\
\end{remark}

\noindent Then finding GC-classes for $\mathcal{C}_c$ reduces to establishing Condition (b1). For now, we are going to focus on the application of GC-classes of the functional empirical process for $\mathcal{C}_c$, that is, on results as in Formula \eqref{eq03}. The focus will be on types of dependence, given it is known that $\mathcal{C}_c$ is GC-class for \textit{iid} data. Using the \textit{GCIP} conditions in Theorem \ref{lemmasanglo} leads to the applicable results as follows.

\begin{theorem}(\cite{sanglotraore})\label{hgc_theoApp} Let $X, X_{1}, X_{2}, \ldots$ be an arbitrary stationary sequence of rrv's with common \textit{cdf} $F$.\\
	
	\noindent a) Let $\mathcal{C}$ be a class of measurable set such that \\
	
	\noindent (a1) for any $\varepsilon > 0$, $N_{[]}\left(\mathcal{C}, \left| \circ \right|, \varepsilon \right) < + \infty$,\\
	
	\noindent (a2) for any $C \in \mathcal{C}$, the following conditions hold : for some $0<\delta<3$,
	
	\begin{equation}
		\sup_{q\geq 1}\dfrac{1}{q^{(3-\delta )/2}}\sum_{i,j=1}^{q}\left[ 
		\mathbb{P(}X_{i}\in C,X_{j}\in C)-\mathbb{P(}X_{i}\in C)\mathbb{P}(
		X_{j}\in C)\right] <+\infty. \label{ghcC_01}
	\end{equation}
	
	\bigskip\noindent and 
	
	\begin{equation}
		\sup_{k\geq 1}\sup_{q\text{ : }q^{2}+1\leq k\leq
			(q+1)^{2}}\sup_{k\leq j\leq (q+1)^{2}}\dfrac{1}{q^{(3-\delta )}}%
		\sum_{i,\ell=1}^{j-q^{2}+1}\left[ 
		\begin{array}{c}
			\mathbb{P(}X_{_{q^{2}+i}}\in C,X_{_{q^{2}+\ell}}\in C) \\ 
			-\mathbb{P(}X_{_{q^{2}+i}}\in C)\mathbb{P(}Y_{_{q^{2}+\ell}}\in C)%
		\end{array}
		\right] <+\infty . \label{ghcC_02}
	\end{equation}
	\bigskip \noindent Then $\mathcal{C}$ is GC-class, that is
	\begin{equation}\label{hgc_02}
		\lim_{n \to +\infty} \sup_{C \in \mathcal{C}}\left|\mathbb{P}_{n}\left(C\right) - \mathbb{P}_{X} \left( C \right) \right|=0.
	\end{equation}
	
	\bigskip \noindent b) Let $\mathcal{F}$ be a class of measurable set such that\\
	
	\noindent (b1) for any $\varepsilon > 0$, $N_{[]} \left( \mathcal{F}, \| \circ \|_{\mathcal{L}_{2}\left(\mathbb{P}_{X} \right)}, \varepsilon \right) < +\infty$,\\
	
	\noindent (b2) for any $f \in \mathcal{F}$, the following conditions hold : for some $0<\delta<3$,
	
	\begin{equation}
		\sup_{q\geq 1}\mathbb{V}ar\left(\frac{1}{q^{(3-\delta)/4}}\sum_{i=1}^{q}f(X_i)\right)<+\infty \label{ghcf_01} 
	\end{equation}
	
	\bigskip\noindent and 
	
	\begin{equation}
		\sup_{k\geq 0}\sup_{q:q^2+1\leq k\leq(q+1)^2 }\sup_{k\leq j\leq(q+1)^2 }\mathbb{V}ar\left(\frac{1}{q^{(3-\delta)/2}}\sum_{i=1}^{j-q^2+1}f(X_{q^2+i})\right)<+\infty. \label{ghcf_02}
	\end{equation}
	
	\bigskip \noindent Then $\mathcal{F}$ is GC-class, that is
	
	\begin{equation}\label{hgc_03}
		\lim_{n \to +\infty} \sup_{f \in \mathcal{F}}\left| \mathbb{P}_{n} \left( f\right) - \mathbb{P}_{X} \left( f \right) \right|=0.
	\end{equation}
\end{theorem}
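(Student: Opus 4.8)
The plan is to deduce Theorem \ref{hgc_theoApp} directly from the two results already in hand: the strong law of Theorem \ref{lemmasanglo} and the general Glivenko--Cantelli criterion of Theorem \ref{hgc_theo}. The key observation is that Theorem \ref{hgc_theoApp} is nothing but Theorem \ref{hgc_theo} in which the abstract pointwise hypotheses (a2) and (b2) --- namely $\mathbb{P}_n(C)\to\mathbb{P}_X(C)$ and $\mathbb{P}_n(f)\to\mathbb{P}_X(f)$ --- have been replaced by the verifiable second-moment conditions \eqref{ghcC_01}--\eqref{ghcC_02} and \eqref{ghcf_01}--\eqref{ghcf_02}. Thus the whole argument reduces to showing that these conditions force the required pointwise almost-sure convergence, and for that the Sangar\'e--Lo strong law is tailor-made.

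I would treat Part (b) first, since it is the cleaner case. Fix $f\in\mathcal{F}$ and apply Theorem \ref{lemmasanglo} with the stationary, $n$-independent choice $f_{i,n}\equiv f$. With this choice the quantities appearing in \eqref{ghc_01} and \eqref{ghc_02} no longer depend on $n$, so the outer supremum $\sup_{n}$ there is vacuous and conditions \eqref{ghc_01}--\eqref{ghc_02} become verbatim the hypotheses \eqref{ghcf_01}--\eqref{ghcf_02}. Theorem \ref{lemmasanglo} then yields $\frac{1}{n}\sum_{i=1}^{n}\bigl(f(X_i)-\mathbb{E}f(X_i)\bigr)\to0$ almost surely, i.e.\ $\mathbb{P}_n(f)\to\mathbb{P}_X(f)$ almost surely. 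This is exactly hypothesis (b2) of Theorem \ref{hgc_theo}(b); since (b1) is assumed, an appeal to Theorem \ref{hgc_theo}(b) gives the uniform convergence \eqref{hgc_03}, as required.

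Part (a) is the same argument run with $f_{i,n}\equiv\mathbb{I}_C$. The only thing to verify is the bookkeeping identity that recasts the covariance sums as scaled variances: since
\[
\mathbb{V}ar\Bigl(\tfrac{1}{q^{(3-\delta)/4}}\textstyle\sum_{i=1}^{q}\mathbb{I}_C(X_i)\Bigr)=\tfrac{1}{q^{(3-\delta)/2}}\sum_{i,j=1}^{q}\bigl[\mathbb{P}(X_i\in C,X_j\in C)-\mathbb{P}(X_i\in C)\mathbb{P}(X_j\in C)\bigr],
\]
condition \eqref{ghcC_01} is precisely \eqref{ghc_01} for $f_{i,n}=\mathbb{I}_C$, and the analogous identity taken over the dyadic block $q^2+1\le k\le(q+1)^2$ shows that \eqref{ghcC_02} is \eqref{ghc_02}. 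Theorem \ref{lemmasanglo} then gives $\mathbb{P}_n(C)\to\mathbb{P}_X(C)$ almost surely, which is hypothesis (a2), and Theorem \ref{hgc_theo}(a) delivers \eqref{hgc_02}.

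There is no deep obstacle here: the substance of the theorem is the reduction itself rather than any new estimate. The only points requiring genuine care are (i) confirming that the mode of convergence supplied by Theorem \ref{lemmasanglo} is almost sure, so that it truly matches the pointwise hypotheses underlying the almost-sure uniform conclusion of Theorem \ref{hgc_theo}, and (ii) the elementary but notation-heavy identification of the block second-moment sums in \eqref{ghcC_02} and \eqref{ghcf_02} with the variance expression \eqref{ghc_02}, keeping the indexing $q^{2}+1\le k\le(q+1)^{2}$ and $k\le j\le(q+1)^2$ aligned on both sides (and noting that the stray $Y$ in \eqref{ghcC_02} is a typographical slip for $X$).
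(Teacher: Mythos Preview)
Your proposal is correct and matches the paper's approach: the paper does not give a detailed proof of this theorem (it is cited from \cite{sanglotraore}), but the single sentence preceding it --- ``Using the \textit{GCIP} conditions in Theorem \ref{lemmasanglo} leads to the applicable results as follows'' --- indicates exactly the reduction you carry out, namely that conditions \eqref{ghcC_01}--\eqref{ghcC_02} and \eqref{ghcf_01}--\eqref{ghcf_02} are just \eqref{ghc_01}--\eqref{ghc_02} specialized to $f_{i,n}=\mathbb{I}_C$ and $f_{i,n}=f$, so Theorem \ref{lemmasanglo} supplies the pointwise hypotheses (a2) and (b2) of Theorem \ref{hgc_theo}. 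Your write-up simply makes explicit what the paper leaves implicit.
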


\bigskip \noindent Since we are also going to provide results for sequences verifying $\alpha$-mixing and $\beta$-mixing conditions, we make a brief summary of these notions.\\

\subsection{$\alpha$-mixing}\label{alphamixing}

\noindent Let us have a brief recall on $\phi$-mixing. Let ($\Omega ,\mathcal{A},\mathbb{P}$) be a probability space and $\mathcal{A}_1$, $\mathcal{A}_2$ two sub $\sigma -$algebras of $\mathcal{A}$. The $\alpha $-mixing coefficient is given by 

\begin{equation*}
\alpha (\mathcal{A}_1,\mathcal{A}_2)=\sup \left\{ \left\vert \mathbb{P}(A)\mathbb{P}(B)-\mathbb{P}(A\cap B)\right\vert, \
 A \in \mathcal{A}_1, \  B \in \mathcal{A}_2\right\}
\end{equation*}
and
\begin{equation*}
	0\leq\alpha (\mathcal{A}_1,\mathcal{A}_2)\leq 1/4.
\end{equation*}
\noindent We recall that we have (see \cite{paul}) the following result on the covariance inequality for $\alpha$-mixing random sequences. We present here all the material used in the proof of this result in a detailed writing, which makes it more understandable for a wide audience. 

\begin{theorem}(Covariance inequality).\label{theo1}
	Let $X$ and $Y$ be mesurable random variables with respect to $A$ and $B$ respectively, We have 
	
	\begin{equation*}
		|\mathbb{C}ov(X,Y)| \leq 8 \alpha^\frac{1}{r}(\mathcal{A},\mathcal{B})||X||_p||Y||_q \ for \ p, \ q,\ r \geq 1 \text{ and } \frac{1}{p}+ \frac{1}{q}+\frac{1}{r}=1.
	\end{equation*}
\end{theorem}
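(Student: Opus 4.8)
The plan is to prove the inequality in two stages: first for \emph{bounded} random variables, where the constant arises directly from the defining supremum of $\alpha(\mathcal{A},\mathcal{B})$, and then to pass to the general $\mathcal{L}_p$--$\mathcal{L}_q$ setting by truncation together with Hölder's inequality, the exponent condition $\tfrac1p+\tfrac1q+\tfrac1r=1$ being exactly what makes the mixing coefficient appear to the power $1/r$. By bilinearity of the covariance I may assume after rescaling that $\|X\|_p=\|Y\|_q=1$, and since $\mathbb{C}ov$ is unchanged under adding constants I may center $X$ and $Y$ whenever convenient.

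The heart of the argument is the bounded case: if $U$ is $\mathcal{A}$-measurable with $\|U\|_\infty\le 1$ and $V$ is $\mathcal{B}$-measurable with $\|V\|_\infty\le 1$, then $|\mathbb{C}ov(U,V)|\le 4\,\alpha(\mathcal{A},\mathcal{B})$. First I would write $\mathbb{C}ov(U,V)=\mathbb{E}[U\,W]$ with $W=\mathbb{E}[V-\mathbb{E}V\mid\mathcal{A}]$, which is $\mathcal{A}$-measurable with mean zero; bounding $|U|\le1$ gives $|\mathbb{C}ov(U,V)|\le\mathbb{E}|W|=\mathbb{E}[\,\mathrm{sign}(W)\,W\,]=\mathbb{C}ov(g,V)$, where $g=\mathrm{sign}(W)$ is $\mathcal{A}$-measurable and $\{-1,+1\}$-valued. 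Repeating the same conditioning step on the $\mathcal{B}$ side replaces $V$ by a $\mathcal{B}$-measurable sign $h\in\{-1,+1\}$, so the problem reduces to $\pm1$-valued functions $g=2\mathbf{1}_A-1$ and $h=2\mathbf{1}_B-1$ with $A\in\mathcal{A}$, $B\in\mathcal{B}$. Then
\[
\mathbb{C}ov(g,h)=4\bigl(\mathbb{P}(A\cap B)-\mathbb{P}(A)\mathbb{P}(B)\bigr),
\]
and the definition of $\alpha$ yields $|\mathbb{C}ov(g,h)|\le 4\alpha$. By bilinearity this upgrades to $|\mathbb{C}ov(X,Y)|\le 4\,\|X\|_\infty\|Y\|_\infty\,\alpha$ for arbitrary bounded $X,Y$.

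For the general case I would truncate at levels $M,N>0$: set $X_M=(-M)\vee(X\wedge M)$, $\widetilde X=X-X_M$, and likewise $Y_N,\widetilde Y$, and split
\[
\mathbb{C}ov(X,Y)=\mathbb{C}ov(X_M,Y_N)+\mathbb{C}ov(X_M,\widetilde Y)+\mathbb{C}ov(\widetilde X,Y_N)+\mathbb{C}ov(\widetilde X,\widetilde Y).
\]
The first term is at most $4MN\alpha$ by the bounded case. Each mixed term has one bounded factor and is therefore controlled by $2M\,\mathbb{E}|\widetilde Y|$ and $2N\,\mathbb{E}|\widetilde X|$ respectively, while the last is bounded by $\mathbb{E}|\widetilde X\widetilde Y|+\mathbb{E}|\widetilde X|\,\mathbb{E}|\widetilde Y|$. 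The tail quantities $\mathbb{E}|\widetilde X|$, $\mathbb{E}|\widetilde X\widetilde Y|$, and so on are then estimated by Hölder's and Markov's inequalities in terms of $\|X\|_p$, $\|Y\|_q$ and the truncation levels.

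The main obstacle is precisely this last stage: one must choose $M$ and $N$ as the correct negative powers of $\alpha$ so that the growing term $4MN\alpha$ balances the shrinking tail contributions, and one must check that the Hölder exponents combine, through $\tfrac1p+\tfrac1q+\tfrac1r=1$, to reproduce exactly $\alpha^{1/r}\|X\|_p\|Y\|_q$; tracking the numerical constants through this optimization is what produces the factor $8$. Equivalently, and more sharply, one can bypass the truncation bookkeeping by deducing from the bounded case the quantile representation $|\mathbb{C}ov(X,Y)|\le 2\int_0^{2\alpha}Q_{|X|}(u)Q_{|Y|}(u)\,du$, where $Q_{|X|}$ denotes the generalized inverse of $t\mapsto\mathbb{P}(|X|>t)$, and then applying the three-factor Hölder inequality with exponents $p,q,r$ to the right-hand side; there the factor $\alpha^{1/r}$ emerges transparently as the $\mathcal{L}_r$-norm of the indicator of $[0,2\alpha]$.
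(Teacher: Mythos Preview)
Your proposal is correct and follows the same two-stage architecture as the paper: the bounded case is handled identically, by conditioning and replacing each variable by the sign of its centered conditional expectation to reduce to indicators, which is exactly the paper's Lemma. For the passage to $\mathcal{L}_p$--$\mathcal{L}_q$, the paper truncates \emph{sequentially}: first it cuts $X$ at a level $a$ chosen so that $\mathbb{E}|X|^p/a^p=\alpha$, obtaining the intermediate bound $|\mathbb{C}ov(X,Y)|\le 6\,\alpha^{1-1/p}\|X\|_p\|Y\|_\infty$, and then repeats the same trick on $Y$ at a level $b$ with $\mathbb{E}|Y|^q/b^q=\alpha^{1-1/p}$, which yields the constant $8$ directly. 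Your simultaneous truncation into four pieces is an equivalent organization of the same idea, but the sequential route makes the constant easier to track: with the four-term split one must also control $\mathbb{C}ov(\widetilde X,\widetilde Y)$ via a three-factor H\"older bound, and a naive summation tends to overshoot $8$. The quantile-function alternative you mention at the end (bounding by $2\int_0^{2\alpha}Q_{|X|}Q_{|Y|}$ and applying H\"older) is Rio's sharper approach and is genuinely different from what the paper does; it gives the result more cleanly but is not the route taken here.
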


\noindent To prove the above theorem, we need the following Lemma.

\begin{lemma}\label{lem1} Under the same assumptions, we have the following result
	
	\begin{equation*}
		|\mathbb{C}ov(X,Y)| \leq 4 \alpha(\mathcal{A} ,\mathcal{B})||X||_\infty||Y||_{\infty}. 
	\end{equation*}
\end{lemma}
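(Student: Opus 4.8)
The plan is to establish the bound
\[
|\mathbb{C}ov(X,Y)| \leq 4\,\alpha(\mathcal{A},\mathcal{B})\,\|X\|_\infty\|Y\|_\infty
\]
for bounded random variables $X$ measurable with respect to $\mathcal{A}$ and $Y$ measurable with respect to $\mathcal{B}$. The natural strategy is to first prove the result for a pair of indicator functions, where the covariance is literally the quantity appearing in the definition of $\alpha(\mathcal{A},\mathcal{B})$, and then to bootstrap to simple functions and finally to general bounded measurable functions by linearity and an approximation argument.

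First I would treat the base case $X=\mathbb{I}_A$ and $Y=\mathbb{I}_B$ with $A\in\mathcal{A}$ and $B\in\mathcal{B}$. Here $\mathbb{C}ov(\mathbb{I}_A,\mathbb{I}_B)=\mathbb{P}(A\cap B)-\mathbb{P}(A)\mathbb{P}(B)$, so directly from the definition of the $\alpha$-mixing coefficient we get $|\mathbb{C}ov(\mathbb{I}_A,\mathbb{I}_B)|\leq \alpha(\mathcal{A},\mathcal{B})$. Next I would pass to simple functions $X=\sum_{i} a_i\,\mathbb{I}_{A_i}$ and $Y=\sum_{j} b_j\,\mathbb{I}_{B_j}$ with the $A_i\in\mathcal{A}$ pairwise disjoint and the $B_j\in\mathcal{B}$ pairwise disjoint. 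By bilinearity of the covariance,
\[
\mathbb{C}ov(X,Y)=\sum_{i,j} a_i b_j\,\mathbb{C}ov(\mathbb{I}_{A_i},\mathbb{I}_{B_j}),
\]
and the main work is to show that the double sum of the covariance terms, taken with appropriate signs, is controlled by $\alpha$ up to the factor $4$. The factor $4$ is where the care is needed: writing $\mathbb{C}ov(\mathbb{I}_{A_i},\mathbb{I}_{B_j})=\mathbb{P}(A_i\cap B_j)-\mathbb{P}(A_i)\mathbb{P}(B_j)$ and separating the sum according to the signs $\operatorname{sgn}(a_i)$ and $\operatorname{sgn}(b_j)$, one groups the positive-sign $A_i$'s into a single set and the negative-sign ones into another (and similarly for the $B_j$'s), which reduces the four resulting blocks each to a single indicator covariance bounded by $\alpha(\mathcal{A},\mathcal{B})$; summing the four blocks yields the constant $4$, while the coefficient bounds are absorbed into $\|X\|_\infty\|Y\|_\infty$.

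The hard part will be making this grouping argument clean, since one must verify that the maximal and minimal partial sums over subcollections of the disjoint $A_i$ are themselves bounded by $\|X\|_\infty$, and that the four sign-blocks really do each reduce to an admissible pair in $\mathcal{A}\times\mathcal{B}$ so that $\alpha$ applies. Once the bound holds for simple functions with the constant $4$, the final step is routine: any bounded measurable $X$ and $Y$ can be approximated uniformly by simple functions measurable with respect to $\mathcal{A}$ and $\mathcal{B}$ respectively, and since $|\mathbb{C}ov(\cdot,\cdot)|$ is continuous under uniform approximation of bounded variables (because $\|\cdot\|_\infty$ controls both $\mathbb{E}|XY|$ and $\mathbb{E}|X|\,\mathbb{E}|Y|$), the inequality passes to the limit. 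This Lemma then feeds directly into the proof of Theorem \ref{theo1} via an interpolation/truncation argument to recover the $\alpha^{1/r}$ exponent and the $L^p,L^q$ norms.
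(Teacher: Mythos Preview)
Your overall strategy (indicators $\to$ simple functions $\to$ uniform limit) is a legitimate classical route, and it is \emph{not} the one the paper takes; but the step you single out as ``the hard part'' is in fact incorrect as you describe it. After splitting the double sum by $\operatorname{sgn}(a_i)$ and $\operatorname{sgn}(b_j)$ you assert that each of the four blocks ``reduces to a single indicator covariance.'' It does not: the $(+,+)$ block is
\[
\sum_{i:\,a_i>0}\ \sum_{j:\,b_j>0} a_i b_j\bigl(\mathbb{P}(A_i\cap B_j)-\mathbb{P}(A_i)\mathbb{P}(B_j)\bigr),
\]
and because the $a_i$ are distinct you cannot collapse $\bigcup_{a_i>0}A_i$ to one event while keeping a single coefficient. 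A triangle inequality here yields $(\sum_i|a_i|)(\sum_j|b_j|)\,\alpha$, which controls by $\|X\|_1\|Y\|_1$, not $\|X\|_\infty\|Y\|_\infty$. The standard repair is: first split $X=X^+-X^-$, $Y=Y^+-Y^-$ (this is where the $4$ comes from), and then for nonnegative $0\le X\le M$, $0\le Y\le N$ use the layer-cake identity $X=M\int_0^1\mathbb{I}_{\{X>Mt\}}\,dt$ to get
\[
\mathbb{C}ov(X,Y)=MN\int_0^1\!\!\int_0^1 \mathbb{C}ov\bigl(\mathbb{I}_{\{X>Mt\}},\mathbb{I}_{\{Y>Ns\}}\bigr)\,dt\,ds,
\]
each integrand bounded by $\alpha$. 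With that correction your approximation argument goes through.

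For comparison, the paper avoids simple functions and approximation altogether. It conditions: since $X$ is $\mathcal{A}$-measurable, $\mathbb{C}ov(X,Y)=\mathbb{E}\bigl[X(\mathbb{E}(Y\mid\mathcal{A})-\mathbb{E} Y)\bigr]$, whence $|\mathbb{C}ov(X,Y)|\le\|X\|_\infty\,\mathbb{E}\bigl|\mathbb{E}(Y\mid\mathcal{A})-\mathbb{E} Y\bigr|=\|X\|_\infty\,|\mathbb{C}ov(B,Y)|$ with $B:=\operatorname{sign}\bigl(\mathbb{E}(Y\mid\mathcal{A})-\mathbb{E} Y\bigr)\in\{-1,+1\}$, which is $\mathcal{A}$-measurable. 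Iterating once with the roles of $\mathcal{A},\mathcal{B}$ swapped produces a second $\{-1,+1\}$-valued variable $A$ that is $\mathcal{B}$-measurable, and one lands on $\|X\|_\infty\|Y\|_\infty\,|\mathbb{C}ov(A,B)|$. Writing $A=\mathbb{I}_{\{A=1\}}-\mathbb{I}_{\{A=-1\}}$ and likewise for $B$ gives exactly four indicator covariances, each $\le\alpha$. This sign-of-conditional-expectation trick is shorter and needs no limiting argument; your corrected route is more elementary in that it never invokes conditional expectations.
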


\begin{proof}(of Lemma \ref{lem1}) Let $A = sign(\mathbb{E}(X \mid \mathcal{B}) - \mathbb{E}(X))$ and $B = sign(\mathbb{E}(Y\mid \mathcal{A}) - \mathbb{E}(Y))$. By the conditional expectation property, we have
	
	\begin{eqnarray*}
		|\mathbb{C}ov(X,Y)| &=& \mid \mathbb{E}(XY)-\mathbb{E}(X) \mathbb{E}(Y)\mid\\
		&=&\mid \mathbb{E}(X\mathbb{E}(Y\mid\mathcal{A})-\mathbb{E}(Y))\mid\\
		&\leq& ||X||_\infty \mathbb{E}\mid\mathbb{E}(Y\mid\mathcal{A})-\mathbb{E}(Y)\mid\\
		&\leq& ||X||_\infty \mathbb{E}|B[(Y\mid\mathcal{A})-\mathbb{E}(Y)]\mid\\
		&\leq& ||X||_\infty \mid \mathbb{E}(BY)-\mathbb{E}(B)\mathbb{E}(Y)\mid
	\end{eqnarray*}
	
	\noindent and since
	
	\begin{eqnarray*}
		\mid\mathbb{E}(BY)-\mathbb{E}(B)\mathbb{E}(Y)|&=&|\mathbb{E}[\mathbb{E}(YB \mid \mathcal{B})-\mathbb{E}(B)\mathbb{E}(Y)]\mid\\
		&=&\mid \mathbb{E}[Y\mathbb{E}(B \mid \mathcal{B})-\mathbb{E}(B)]\mid\\
		&\leq& ||Y||_\infty \mathbb{E}|\mathbb{E}(B\mid\mathcal{B})-\mathbb{E}(B)|\\
		&\leq& ||X||_\infty \mathbb{E}\mid A[(B \mid \mathcal{B})-\mathbb{E}(B)]\mid\\
		&\leq& ||Y||_\infty |\mathbb{E}(AB)-\mathbb{E}(B)\mathbb{E}(A)|,
	\end{eqnarray*}
	
	\noindent therefore
	
	$$|\mathbb{C}ov(X,Y)| \leq ||X||_\infty ||Y||_\infty |\mathbb{E}(AB)-\mathbb{E}(A)\mathbb{E}(B)|.$$
	
	\bigskip\noindent Let now $A^+=\{a=1\}, A^-= \{a= -1\}, B^+ =\{b =1\}, B^-=\{b = -1\}$, then 
	
	\begin{eqnarray*}
		|\mathbb{E}(AB)-\mathbb{E}(A)\mathbb{E}(B)|&=&|[\mathbb{P}(A^+\cap B^+)-\mathbb{P}(A^+)\mathbb{P}(B^+)]\\
		&&+[\mathbb{P}(A^-\cap B^-)-\mathbb{P}(A^-)\mathbb{P}(B^-)]\\
		&&+[\mathbb{P}(A^+\cap B^-)-\mathbb{P}(A^+)\mathbb{P}(B^-)]\\
		&&+[\mathbb{P}(A^-\cap B^+)-\mathbb{P}(A^-)\mathbb{P}(B^+)]|.
	\end{eqnarray*}
	Hence 
	\begin{equation*}
		|\mathbb{C}ov(X,Y)| \leq 4\alpha(\mathcal{A},\mathcal{B})||X||_\infty ||Y||_\infty .
	\end{equation*}
\end{proof}

\noindent We are now ready to prove the theorem \ref{theo1}.

\begin{proof} (of Theorem \ref{theo1})\\
	$$||X||_p< \infty, \ ||Y||_p< \infty, \ 1<p<\infty.$$
	
	\bigskip\noindent Define $\overline{X}=X  \mathbb{I}_{\{|X|\leq a\}}$,    $\underline{X}=X  \mathbb{I}_{\{|X|> a\}}$ and write $X=\overline{X} + \underline{X}.$ Thus, by the Lemma \ref{lem1}, 
	
	$$\mathbb{C}ov(X,Y)=|\mathbb{C}ov(\overline{X},Y)+ \mathbb{C}ov(\underline{X}, Y)| \leq 4\alpha(\mathcal{A},\mathcal{B}) a ||Y||_\infty +2 ||Y||_\infty \mathbb{E}| \underline{X}|.$$
	
	\bigskip\noindent Now Markov's inequality leads to $\mathbb{E}|\underline{X}|\leq 2 a \frac{\mathbb{E}|X|^p}{a^p}.$ Set $\frac{\mathbb{E}|X|^p}{a^p}=\alpha(\mathcal{A},\mathcal{B})$ then 
	
	\begin{equation*}
		|\mathbb{C}ov(X,Y)| \leq 6 \alpha^{1-1/p}(\mathcal{A},\mathcal{B})||Y||_\infty ||X||_\infty.
	\end{equation*}
	
	\bigskip\noindent Let $||X||_p<\infty \text{ and } ||Y||_q <\infty, \text{ with } \frac{1}{p}+\frac{1}{q}<1.$
	
	\bigskip\noindent Let us define $\overline{Y}=Y \mathbb{I}_{\{|Y|\leq b\}}$,    $\underline{Y}=Y \mathbb{I}_{\{|Y|> b\}}$  and write $Y=\overline{Y} + \underline{Y},$ hence analogously  
	
	\begin{equation*}
		|\mathbb{C}ov(Y,X)|\leq 6 \alpha^{1-1/p}(\mathcal{A},\mathcal{B})b ||X||_p + 2b ||\underline{Y}||.
	\end{equation*}
	
	\bigskip\noindent By setting $\frac{\mathbb{E}|Y|^q}{a^q}=\alpha^{1-1/p}(A,B)$, it yields
	\begin{equation*}
		|\mathbb{C}ov(X,Y)|\leq 8 \alpha^{1/r}(\mathcal{A},\mathcal{B})||Y||_q||X||_p.
	\end{equation*}
\end{proof}
\noindent We define the strong mixing coefficient $\alpha(n)$ by

\begin{equation*}
	\alpha (n)=\sup \left\{ \alpha \left(\mathcal{F}_{1}^{k},\mathcal{F}_{n+k}^{\infty }\right), \ k\in \mathbb{N}^{\ast }\right\} 
\end{equation*}

\bigskip\noindent where $\mathcal{F}_{j}^{\ell }$ is the $\sigma -$algebra generated by the variables $(X_{i}$, $j\leq i\leq \ell )$. We say that  $(X_{n})_{n\geq 1}$ is $\alpha$-mixing if $\alpha (n)\rightarrow 0$ as $n\rightarrow +\infty$. For some further clarification on this point, the reader may have a quick look at \cite{paul} and \cite{rio}.\\

\subsection{$\beta$-mixing} 
Let $\mathcal{A}_{1,n}$ be the $\sigma $-algebra generated by the finite subset $X_{1},\cdots,X_{n}$ and $\mathcal{A}_{2,n}$ that generated by the infinite subset  $X_{n+k+1},X_{n+k+2},\ldots $ We define the mixing of type $\beta $ by means of the quantity

\begin{equation*}
	\beta (n,k)=\beta(\mathcal{A}_{1,n},\mathcal{A}_{2,n})=\mathbb{E}\left[ \text{ess}-\sup \left\{\mathbb{P}(B|\mathcal{A}_{1,n})-%
		\mathbb{P}(B),\ B\in \mathcal{A}_{2,n}\right\}\right]
\end{equation*}
and
\begin{equation*}
	0\leq \beta(\mathcal{A}_{1,n},\mathcal{A}_{2,n}) \leq 1.
\end{equation*}

\bigskip\noindent This coefficient is called the absolute  regularity, or $\beta$-mixing coefficient, it may be rewritten as 

\begin{equation*}
	\beta (n,k)=\sup_{A_i\in\mathcal{A}_{1,n},\ B_i\in\mathcal{A}_{2,n}} \left\{ \frac{1}{2}\sum_{i=1}^{I}\sum_{j=1}^{J}|\mathbb{P}(A_i\cap B_j)-\mathbb{P}(A_i)\mathbb{P}(B_j)|\right\}.
\end{equation*}

\bigskip\noindent For more details on this question, the reader may have a quick look at \cite{paul}, \cite{rio} and \cite{Bradley}.

\bigskip\noindent In the next section, we give our results for weakly dependent sequences, under assumptions of stationary $\alpha$-mixing and $\beta$-mixing sequences for real-valued empirical functions.

\section{Results and Discussion} \label{sec06}
\noindent We begin with the main result.
\subsection{GC-classes for real-valued empirical functions}

\noindent We focus on $\mathcal{F}_c$ and define the empirical function

\begin{equation*}
F_{n}(x)=\frac{1}{n}\sum_{i=1}^{n} \mathbb{I}_{]-\infty,\ x]}(X_i), \ x \in \mathbb{R},
\end{equation*}

\noindent and its expectation

\begin{equation*}
F(x)=\mathbb{E}\left(\mathbb{I}_{]-\infty, \ x]}(X_i)\right), \ x \in \mathbb{R}.
\end{equation*}

\bigskip\noindent Now, by applying the Part (a) of Theorem (\ref{hgc_theoApp}, page \pageref{hgc_theoApp}) for $C_x=]-\infty, x]$, $x \in \mathbb{R}$, we get

\begin{theorem} \label{thg-gc-ef-01} Let $X$, $X_{1}$ ,$X_{2}$, $\ldots$  be an arbitrary stationary and square integrable sequence of \textit{rrv}'s, defined on a probability space $(\Omega, \mathcal{A}, \mathbb{P})$. We have 
	
\begin{equation*}
\sup_{x\in \mathbb{R}}|F_n(x)-F(x)|\rightarrow 0, \ \text{almost surely, as} \ n\rightarrow +\infty
\end{equation*}

	\noindent whenever the following general conditions hold : for some $\delta \in ]0;3[$
	
	\begin{equation}
		\sup_{q\geq 1}\frac{1}{q^{(3-\delta )/2}}\left[ \sum_{i=1}^{q}\mathbb{V}ar\left( \mathbb{I}_{\left\{ X_{i}\leq
			x\right\} }\right)+ \sum_{(i\neq j)=1}^{q}\mathbb{C}ov\left(\mathbb{I}_{\left\{ X_{i}\leq x\right\} },\mathbb{I}_{\left\{ X_{j}\leq x\right\} }\right)\right] <+\infty \label{gcep1}
	\end{equation}
	and
	\begin{equation} \label{gcep2}
		\sup_{q\geq 1}\frac{1}{q^{(3-\delta )}}%
		\left[\sum_{i=q^{2}+1}^{(q+1)^{2}}\mathbb{V}ar\left(
		\mathbb{I}_{\left\{ X_{i}\leq x\right\} }\right)+\sum_{(i\neq j)=q^{2}+1}^{(q+1)^{2}}\mathbb{C}ov\left(
		\mathbb{I}_{\left\{ X_{i}\leq x\right\} },\mathbb{I}_{\left\{ X_{j}\leq x\right\} }\right)\right]<+\infty .
	\end{equation}
\end{theorem}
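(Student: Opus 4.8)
The plan is to obtain the statement as a direct specialization of Part~(a) of Theorem~\ref{hgc_theoApp} to the class $\mathcal{C}_c=\{C_x=\,]-\infty,x],\ x\in\mathbb{R}\}$. This is precisely the class whose empirical probabilities reproduce the empirical function, since $\mathbb{P}_n(C_x)=F_n(x)$ and $\mathbb{P}_X(C_x)=\mathbb{P}(X\le x)=F(x)$. Consequently $\sup_{C\in\mathcal{C}_c}|\mathbb{P}_n(C)-\mathbb{P}_X(C)|=\sup_{x\in\mathbb{R}}|F_n(x)-F(x)|$, and it suffices to verify hypotheses (a1) and (a2) of Theorem~\ref{hgc_theoApp} for every $C=C_x$, $x\in\mathbb{R}$; the conclusion \eqref{hgc_02} is then exactly the claimed almost sure uniform convergence.

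First I would dispose of the entropy condition (a1). By the Remark following Theorem~\ref{hgc_theo}, the class $\mathcal{C}_c$ of lower half-lines is a VC-class of index $2$: intersecting $]-\infty,x]$ with a three-point set $\{x_1<x_2<x_3\}$ can only produce $\emptyset$, $\{x_1\}$, $\{x_1,x_2\}$ or the whole set, so $\{x_2\}$ is never picked out and no three-point set is shattered. Feeding $I(\mathcal{C}_c)=2$ into the VC bound recalled in Section~\ref{sec02} shows that $N_{[]}(\mathcal{C}_c,|\circ|,\varepsilon)<+\infty$ for every $\varepsilon>0$, so (a1) holds automatically and may be dropped.

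The substance of the proof is the translation of the covariance condition (a2) into the variance/covariance form \eqref{gcep1}--\eqref{gcep2}. Here I would use two elementary observations. Since the sequence is stationary with common cdf $F$, we have $\{X_i\in C_x\}=\{X_i\le x\}$ and $\mathbb{P}(X_i\in C_x)=F(x)$ for every $i$; and for indicators one has the identity
\[
\mathbb{P}(X_i\in C_x,\,X_j\in C_x)-\mathbb{P}(X_i\in C_x)\,\mathbb{P}(X_j\in C_x)=\mathbb{C}ov\!\left(\mathbb{I}_{\{X_i\le x\}},\,\mathbb{I}_{\{X_j\le x\}}\right),
\]
which for $i=j$ reduces to $\mathbb{V}ar(\mathbb{I}_{\{X_i\le x\}})$. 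Splitting the double sum $\sum_{i,j=1}^{q}$ in \eqref{ghcC_01} into its diagonal ($i=j$) and off-diagonal ($i\ne j$) parts therefore turns \eqref{ghcC_01} verbatim into the bracketed expression of \eqref{gcep1}. Square-integrability is no obstacle, as the $\mathbb{I}_{\{X_i\le x\}}$ are bounded, so all variances entering the Sangar\'e--Lo hypothesis of Theorem~\ref{lemmasanglo} are finite.

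The point that needs care, and the step I expect to be the main obstacle, is the blockwise condition \eqref{ghcC_02}, whose suprema run over $k$ and over $j$ with $k\le j\le(q+1)^2$, the inner sum ranging over the $j-q^2+1$ indices shifted by $q^2$. I would handle it by invoking stationarity, so that each such partial double sum equals $\sum_{i,\ell=1}^{m}\mathbb{C}ov(\mathbb{I}_{\{X_i\le x\}},\mathbb{I}_{\{X_\ell\le x\}})$ with $m=j-q^2+1$, and is thus carried by the maximal block $\{q^2+1,\dots,(q+1)^2\}$ obtained at $j=(q+1)^2$, whose normalized value is exactly the bracket of \eqref{gcep2}. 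The delicate part is precisely that, because covariances of indicators may change sign, one must argue that controlling this maximal block uniformly in $q$ through \eqref{gcep2} absorbs the suprema over $k$ and $j$ in \eqref{ghcC_02}; once this reduction is justified, \eqref{gcep2} supplies the second requirement of (a2). With (a1) and (a2) established, Theorem~\ref{hgc_theoApp}(a) applies and delivers $\sup_{x\in\mathbb{R}}|F_n(x)-F(x)|\to 0$ almost surely.
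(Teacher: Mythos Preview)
Your proposal is correct and follows essentially the same route as the paper: both specialize Part~(a) of Theorem~\ref{hgc_theoApp} to $C_x=\,]-\infty,x]$, invoke the VC remark to dispose of (a1), and rewrite the double covariance sums \eqref{ghcC_01}--\eqref{ghcC_02} as the variance/off-diagonal covariance decompositions \eqref{gcep1}--\eqref{gcep2}. You are in fact more scrupulous than the paper on the blockwise condition: the paper simply replaces the triple supremum in \eqref{ghcC_02} by the single supremum over the full block $\{q^2+1,\dots,(q+1)^2\}$ via an asserted equality, whereas you correctly flag that monotonicity of the partial-sum variance in the block length is not automatic when covariances can be negative---a point the paper does not address.
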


\begin{remark}
	Since in the expressions in \eqref{gcep1} and \eqref{gcep2}, the covariances are zero whenever the variables are independent and identically distributed (iid), the iid case remains valid without any further condition, which is the classical Glivenko-Cantelli theorem.
\end{remark}

\begin{proof}
	We apply the Part (a) of Theorem (\ref{hgc_theoApp}, page \pageref{hgc_theoApp}) for $C_x=]-\infty, x]$, $x \in \mathbb{R}$ and verify the conditions \eqref{ghc_01} and \eqref{ghc_02} in Theorem (\ref{lemmasanglo}, page \pageref{lemmasanglo}). The conditions become
	
	\begin{eqnarray*}
		\sup_{q\geq 1}\mathbb{V}ar\left( \frac{1}{q^{(3-\delta )/4}}%
		\sum_{i=1}^{q}\mathbb{I}_{\left\{ X_{i}\leq x\right\} }\right) &=&\sup_{q\geq 1}\frac{1}{q^{(3-\delta )/2}}\mathbb{V}ar\left( \sum_{i=1}^{q}\mathbb{I}_{\left\{ X_{i}\leq
			x\right\} }\right)  \notag\\
		&=&\sup_{q\geq 1}\frac{1}{q^{(3-\delta )/2}}\sum_{i,j=1}^{q} \mathbb{C}ov\left(\mathbb{I}_{\left\{ X_{i}\leq x\right\} },\mathbb{I}_{\left\{ X_{j}\leq x\right\} }\right) \\
		\\
		&=&\sup_{q\geq 1}\frac{1}{q^{(3-\delta )/2}}\left[ \sum_{i=1}^{q}\mathbb{V}ar\left( \mathbb{I}_{\left\{ X_{i}\leq
			x\right\} }\right)+ \sum_{(i\neq j)=1}^{q}\mathbb{C}ov\left(\mathbb{I}_{\left\{ X_{i}\leq x\right\} },\mathbb{I}_{\left\{ X_{j}\leq x\right\} }\right)\right] \\
		&<&+\infty
	\end{eqnarray*}
	
	\noindent and
	
	\begin{eqnarray*}
		&&\sup_{k\geq 1}\sup_{q\text{ : }q^{2}+1\leq k\leq (q+1)^{2}}\sup_{k\leq
			j\leq (q+1)^{2}}\mathbb{V}ar\left( \dfrac{1}{q^{(3-\delta )/2}}%
		\sum_{i=1}^{j-q^{2}+1}\mathbb{I}_{\left\{ X_{q^{2}+i}\leq x\right\} }\right) 
		\notag \\
		&=&\sup_{k\geq 1}\sup_{q\text{ : }q^{2}+1\leq k\leq (q+1)^{2}}\sup_{k\leq
			j\leq (q+1)^{2}}\frac{1}{q^{(3-\delta )}}\mathbb{V}ar\left(
		\sum_{i=1}^{j-q^{2}+1}\mathbb{I}_{\left\{ X_{q^{2}+i}\leq x\right\} }\right) 
		\notag \\
		&=&\sup_{q\geq 1}\frac{1}{q^{(3-\delta )}}%
		\mathbb{V}ar\left(
		\sum_{i=q^{2}+1}^{(q+1)^{2}}\mathbb{I}_{\left\{ X_{i}\leq x\right\} }\right) \\
		&=&\sup_{q\geq 1}\frac{1}{q^{(3-\delta )}}%
		\sum_{i,j=q^{2}+1}^{(q+1)^{2}}\mathbb{C}ov\left(
		\mathbb{I}_{\left\{ X_{i}\leq x\right\} },\mathbb{I}_{\left\{ X_{j}\leq x\right\} }\right) \\
		&=&\sup_{q\geq 1}\frac{1}{q^{(3-\delta )}}%
		\left[\sum_{i=q^{2}+1}^{(q+1)^{2}}\mathbb{V}ar\left(
		\mathbb{I}_{\left\{ X_{i}\leq x\right\} }\right)+\sum_{(i\neq j)=q^{2}+1}^{(q+1)^{2}}\mathbb{C}ov\left(
		\mathbb{I}_{\left\{ X_{i}\leq x\right\} },\mathbb{I}_{\left\{ X_{j}\leq x\right\} }\right)\right] \\
		&<&+\infty. 
	\end{eqnarray*}
\end{proof}

\begin{remark}
	Note that for sequences of stationary random variables, the condition \eqref{gcep1} implies that of \eqref{gcep2}. 
\end{remark}

\noindent In the sequel, we use the Theorem (\ref{thg-gc-ef-01} page \pageref{thg-gc-ef-01}) to give applications for stationary $\alpha$-mixing and $\beta$-mixing sequences. 

\subsection{$\alpha $-mixing case}

\noindent We already made a brief recall on $\alpha$-mixing in Subsection \ref{alphamixing} in Section \ref{sec04}. We are now going to provide applications to it.\\

\begin{proposition}\label{prop01}
	Suppose that $X_j's$ form a $\alpha-$mixing stationary and square integrable sequence of random variables with mixing condition $\alpha$. Then
	
	\begin{equation*}
		\sup_{x \in \mathbb{R}} |F_n(x)-F(x)|\rightarrow 0,  \text{ almost surely, as}\  n\rightarrow + \infty
	\end{equation*}
	
	\noindent holds whenever we have, for some $\delta \in ]0;1[$,
	
	\begin{equation*}
		\alpha(n)=O\left(n^{-\left(\frac{1+\delta}{1-\delta}\right)} \right). 
	\end{equation*}
\end{proposition}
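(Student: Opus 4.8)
The plan is to invoke Theorem \ref{thg-gc-ef-01}, which reduces the claim to verifying the two variance/covariance conditions \eqref{gcep1} and \eqref{gcep2}. Since the sequence is stationary, the Remark following Theorem \ref{thg-gc-ef-01} tells us that \eqref{gcep1} already implies \eqref{gcep2}, so it suffices to establish \eqref{gcep1} alone. Writing $g_x(X_i)=\mathbb{I}_{\{X_i\le x\}}$, the goal is to bound
\[
\sup_{q\ge 1}\frac{1}{q^{(3-\delta)/2}}\left[\sum_{i=1}^{q}\mathbb{V}ar\!\left(g_x(X_i)\right)+\sum_{(i\neq j)=1}^{q}\mathbb{C}ov\!\left(g_x(X_i),g_x(X_j)\right)\right]<+\infty,
\]
and every estimate below will depend on $x$ only through quantities bounded by $1$, so the bound is automatically uniform in $x$.

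I would first dispose of the diagonal (variance) term. Since $g_x$ is an indicator, $\mathbb{V}ar(g_x(X_i))=F(x)(1-F(x))\le 1/4$, so the variance sum is at most $q/4$ and contributes $\tfrac14\,q^{(\delta-1)/2}$ to the displayed quantity, which tends to $0$ precisely because $\delta<1$. This is the one place where the restriction $\delta\in\,]0;1[$ (rather than the broader $]0;3[$ permitted by the SLLN) is genuinely used.

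The core of the argument is the off-diagonal sum. Here I would apply the covariance inequality of Theorem \ref{theo1} to the bounded variables $g_x(X_i),g_x(X_j)$ with conjugate exponents tuned to $\delta$: two equal moment exponents $s=4/(1+\delta)$ and a mixing exponent $t=2/(1-\delta)$, so that $\tfrac{2}{s}+\tfrac1t=1$ and $\tfrac1t=(1-\delta)/2$. Because $\|g_x(X_i)\|_s=F(x)^{1/s}\le 1$, this yields
\[
\left|\mathbb{C}ov\!\left(g_x(X_i),g_x(X_j)\right)\right|\le 8\,\alpha(|i-j|)^{(1-\delta)/2}.
\]
Grouping pairs by their lag $k=|i-j|$ via stationarity and inserting the hypothesised decay $\alpha(k)=O\!\left(k^{-(1+\delta)/(1-\delta)}\right)$, the power of the mixing coefficient becomes $O\!\left(k^{-(1+\delta)/2}\right)$, giving
\[
\sum_{(i\neq j)=1}^{q}\left|\mathbb{C}ov\right|\le 16\sum_{k=1}^{q-1}(q-k)\,\alpha(k)^{(1-\delta)/2}\le C\sum_{k=1}^{q-1}(q-k)\,k^{-(1+\delta)/2}.
\]

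The final step is the balancing computation: since the exponent $b=(1+\delta)/2$ lies in $]0,1[$, the elementary estimate $\sum_{k=1}^{q-1}(q-k)k^{-b}\le q\sum_{k=1}^{q-1}k^{-b}=O(q^{\,2-b})$ holds, and $2-b=(3-\delta)/2$, so the covariance sum grows at exactly the rate $q^{(3-\delta)/2}$ and cancels against the normalisation in \eqref{gcep1}. Thus the covariance contribution is bounded uniformly in $q$ and in $x$, condition \eqref{gcep1} is verified, and Theorem \ref{thg-gc-ef-01} delivers the uniform almost-sure convergence. I expect the main obstacle to be precisely this exponent bookkeeping: one must select the triple $(s,s,t)$ in the covariance inequality so that the resulting power $\alpha^{(1-\delta)/2}$, combined with the polynomial decay rate, makes the covariance sum grow at rate exactly $q^{(3-\delta)/2}$ — the proposition's rate $(1+\delta)/(1-\delta)$ is calibrated to make this cancellation tight — together with the justification of the partial-sum bound $\sum_{k=1}^{q-1}(q-k)k^{-b}=O(q^{2-b})$ for $b\in\,]0,1[$.
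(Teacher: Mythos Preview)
Your argument is correct, but it differs from the paper's route in how the covariance sum is controlled. The paper applies Theorem~\ref{theo1} with the trivial exponents $p=q=\infty$, $r=1$ (so $|\mathbb{C}ov|\le 8\,\alpha(|i-j|)$ with no fractional power), and then introduces an auxiliary cut-off $n$: lags $h<n$ are bounded using only $\alpha\le 1/4$, lags $h\ge n$ are bounded by $\alpha(n)$ times the number of remaining pairs, giving an expression of order $nq\,q^{-(3-\delta)/2}+q^{2}\alpha(n)\,q^{-(3-\delta)/2}$. Choosing $n=[q^{(1-\delta)/2}]$ balances the two terms and forces $\alpha(n)=O(n^{-(1+\delta)/(1-\delta)})$. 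Your approach instead tunes the H\"older exponents in Theorem~\ref{theo1} to $p=q=4/(1+\delta)$, $r=2/(1-\delta)$, extracting the fractional power $\alpha^{(1-\delta)/2}$ up front and then summing directly without any splitting. Both devices are standard in the mixing literature and yield exactly the same decay condition; your version is arguably cleaner (the exponent identity $2-(1+\delta)/2=(3-\delta)/2$ makes the calibration transparent), while the paper's truncation-and-optimize trick is more elementary and would adapt more readily to mixing profiles that are not pure powers.
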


\begin{proof}
We need to verify the condition \eqref{gcep1} in Theorem (\ref{thg-gc-ef-01}, page \pageref{thg-gc-ef-01}) 	
	\begin{eqnarray*}
		\sup_{q \geq 1} \frac{1}{q^{(3 - \delta)/2}} \mathbb{V}ar\left(\sum_{i=1}^q \mathbb{I}_{\left\{X_i \leq x\right\}}\right) &=&\sup_{q\geq 1}\frac{1}{q^{(3-\delta )/2}}\left[ \sum_{i=1}^{q}\mathbb{V}ar\left( \mathbb{I}_{\left\{ X_{i}\leq
			x\right\} }\right)+ \sum_{(i\neq j)=1}^{q}\mathbb{C}ov\left(\mathbb{I}_{\left\{ X_{i}\leq x\right\} },\mathbb{I}_{\left\{ X_{j}\leq x\right\} }\right)\right] \\
		&=&\sup_{q\geq 1}\frac{1}{q^{(3-\delta )/2}}\left[ \sum_{i=1}^{q}\mathbb{V}ar\left( \mathbb{I}_{\left\{ X_{i}\leq
			x\right\} }\right)+ 2\sum_{1\leq i< j\leq q}\mathbb{C}ov\left(\mathbb{I}_{\left\{ X_{i}\leq x\right\} },\mathbb{I}_{\left\{ X_{j}\leq x\right\} }\right)\right] \\
		& \leq& \frac{q}{q^{(3 - \delta)/2}}+\frac{16}{q^{(3 - \delta)/2}} \sum_{1 \leq i \leq q-1 ,\ j = i+h,\ h< n} \alpha(|i-j|)\\
		&&+ \frac{16}{q^{(3 - \delta)/2}} \sum_{1 \leq i \leq q-1 ,\ j = i+h,\ h\geq n} \alpha(|i-j|) \text{ by the Theorem \ref{theo1}}\\	
		& \leq& \frac{q}{q^{(3 - \delta)/2}}+\frac{8nq}{q^{(3 - \delta)/2}}\times\frac{1}{4}+ \frac{8q^2-8nq-q}{q^{(3 - \delta)/2}} \alpha(n)\\
		& \sim& \frac{2nq}{q^{(3 - \delta)/2}}+ \frac{8q^2}{q^{(3 - \delta)/2}} \alpha(n)<+\infty.
	\end{eqnarray*}
	
	\noindent Finally, the condition reduces to
	
	\begin{equation*}
		n=\left[q^{\left(1- \delta\right)/2} \right] \text{ and }  \alpha(n)  =O\left(q^{\frac{1+\delta}{2}} \right) =O\left(n^{-\left(\frac{1+\delta}{1-\delta}\right)} \right).
	\end{equation*}
\end{proof}  

\begin{remark}
	 In the case of $\alpha$-mixing sequences, we do not need whole function $\alpha$. Instead we may fix $x \in \mathbb{R}$ and consider the modulus $\alpha(x,n)$ related to the sequence $\mathbb{I}_{]-\infty,x]}(X_i)$'s. It is clear that 
\begin{equation*}
	\forall n\geq 1, \ \forall x \in \mathbb{R}, \ \alpha(x,n)\leq \alpha(n).
\end{equation*}

	\bigskip\noindent We have the Glivenko-Cantelli theorem if and only if
	
	\begin{equation*}
		\forall x \in \mathbb{R}, \ \alpha(n)  =O\left(n^{-\left(\frac{1+\delta}{1-\delta}\right)} \right).
	\end{equation*}
	
	\bigskip\noindent The same can be done for any particular class $\mathcal{C}$ by using the sequences $\mathbb{I}_{C}(X_i)$'s and the $\alpha(C,n)$ mixing modulus, and get the condition
	
	\begin{equation*}
		\forall x \in \mathcal{C} , \ \alpha(C,n)=O\left(n^{-\left(\frac{1+\delta}{1-\delta}\right)} \right).
	\end{equation*}
\end{remark}

\subsection{$\beta $-mixing case}
We consider here a stationary process $(X_t)_{t \in \mathbb{Z}}$ satisfying a $\beta$-mixing condition. The $\beta$-mixing coefficient is defined as

\begin{equation*}
	\beta(n) = \sup_{A \in \mathcal{F}_{t}^{-\infty},\ B \in \mathcal{F}_{t+n}^{\infty}} \left| \mathbb{P}(A \cap B) - \mathbb{P}(A) \mathbb{P}(B) \right|,
\end{equation*}

\bigskip\noindent where $\mathcal{F}_{t}^{-\infty} = \sigma(X_\ell : \ell \leq t)$ and $\mathcal{F}_{t+n}^{\infty} = \sigma(X_\ell : \ell \geq t+n)$.

\bigskip\noindent We begin with giving this useful result which formulates the covariance inequality for stationary $\beta$-mixing sequences.

\begin{lemma}\label{theo02}
	For any stationary beta-mixing process $(X_t)_{t \in \mathbb{Z}}$ and for all $t, s \in \mathbb{Z}$, we have
	
	\begin{equation*}
		|\mathbb{C}ov(X_t, X_s)| \leq 2\beta(|t - s|) \|X_t\|_\infty \|X_s\|_\infty,
	\end{equation*}
	
	\bigskip\noindent  where $\|X_t\|_\infty = \sup_{\omega \in \Omega} |X_t(\omega)|$ is the supremum norm of $X_t$.
\end{lemma}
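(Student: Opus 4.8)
The plan is to bound the covariance of two bounded random variables by a $\beta$-mixing coefficient, mirroring the structure already used for the $\alpha$-mixing covariance inequality in Lemma \ref{lem1}, but exploiting the sharper coupling interpretation of $\beta$-mixing. First I would reduce to the defining formula for $\beta(|t-s|)$ written in its total-variation form,
\begin{equation*}
\beta(n)=\sup \frac{1}{2}\sum_{i}\sum_{j}\left|\mathbb{P}(A_i\cap B_j)-\mathbb{P}(A_i)\mathbb{P}(B_j)\right|,
\end{equation*}
taken over finite measurable partitions $(A_i)$ of events in $\mathcal{F}_{t}^{-\infty}$ and $(B_j)$ in $\mathcal{F}_{t+n}^{\infty}$. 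The key observation is that for bounded $X_t$ and $X_s$ one can approximate each variable uniformly by a simple function built on such a partition, so that the covariance decomposes as a finite double sum $\sum_{i,j} a_i b_j\,[\mathbb{P}(A_i\cap B_j)-\mathbb{P}(A_i)\mathbb{P}(B_j)]$ with $|a_i|\le\|X_t\|_\infty$ and $|b_j|\le\|X_s\|_\infty$.

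The main steps, in order, are as follows. Assume without loss of generality $t<s$ and set $n=|t-s|=s-t$, so that $X_t$ is $\mathcal{F}_{t}^{-\infty}$-measurable and $X_s$ is $\mathcal{F}_{t+n}^{\infty}$-measurable. Second, I would discretise: replace $X_t$ by a simple function $\widetilde X_t=\sum_i a_i \mathbb{I}_{A_i}$ and $X_s$ by $\widetilde X_s=\sum_j b_j \mathbb{I}_{B_j}$ with $\|X_t-\widetilde X_t\|_\infty<\eta$ and $\|X_s-\widetilde X_s\|_\infty<\eta$; since $\mathrm{Cov}$ is bilinear and continuous in sup-norm for bounded variables, it suffices to bound $|\mathrm{Cov}(\widetilde X_t,\widetilde X_s)|$ and then let $\eta\to0$. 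Third, I would write
\begin{equation*}
\mathrm{Cov}(\widetilde X_t,\widetilde X_s)=\sum_{i,j} a_i b_j\left[\mathbb{P}(A_i\cap B_j)-\mathbb{P}(A_i)\mathbb{P}(B_j)\right],
\end{equation*}
and bound its absolute value by $\|X_t\|_\infty\|X_s\|_\infty\sum_{i,j}\left|\mathbb{P}(A_i\cap B_j)-\mathbb{P}(A_i)\mathbb{P}(B_j)\right|$. Fourth, I would identify the remaining double sum as at most $2\beta(n)$ by the partition characterisation of the $\beta$-coefficient quoted above, yielding $|\mathrm{Cov}(X_t,X_s)|\le 2\beta(|t-s|)\|X_t\|_\infty\|X_s\|_\infty$.

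The hard part will be handling the approximation cleanly: the sup-norm estimate $|\mathrm{Cov}(\widetilde X_t,\widetilde X_s)-\mathrm{Cov}(X_t,X_s)|\le 2\eta(\|X_t\|_\infty+\|X_s\|_\infty)+2\eta^2$ must be tracked so that the limit $\eta\to0$ recovers the exact constant $2$ rather than a looser bound, and one must ensure the partitions used for the two variables can be taken measurable with respect to the correct $\sigma$-algebras $\mathcal{F}_{t}^{-\infty}$ and $\mathcal{F}_{t+n}^{\infty}$ so that the supremum defining $\beta(n)$ genuinely dominates the discretised sum. A cleaner alternative, which I would fall back on if the partition bookkeeping becomes delicate, is to invoke the coupling characterisation of $\beta$-mixing: there exists $X_s'$ independent of $\mathcal{F}_{t}^{-\infty}$ with the same law as $X_s$ and $\mathbb{P}(X_s\ne X_s')\le\beta(n)$, whence $|\mathrm{Cov}(X_t,X_s)|=|\mathbb{E}[X_t(X_s-X_s')]-\mathbb{E}X_t\,\mathbb{E}(X_s-X_s')|\le 2\|X_t\|_\infty\|X_s\|_\infty\,\mathbb{P}(X_s\ne X_s')\le 2\beta(n)\|X_t\|_\infty\|X_s\|_\infty$, which delivers the constant $2$ directly and is the most economical route to the stated inequality.
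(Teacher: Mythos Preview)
Your proposal is correct, and it follows a genuinely different route from the paper. The paper's proof does not discretise or couple; instead it conditions on $\mathcal{F}_t$, writes
\[
\mathrm{Cov}(X_t,X_s)=\mathbb{E}\bigl[X_t\bigl(\mathbb{E}[X_s\mid\mathcal{F}_t]-\mathbb{E}[X_s]\bigr)\bigr],
\]
pulls out $\|X_t\|_\infty$, and then asserts the bound $|\mathbb{E}[X_s\mid\mathcal{F}_t]-\mathbb{E}[X_s]|\le 2\beta(|t-s|)\|X_s\|_\infty$ directly from the ess-sup definition of $\beta$ given in Section~\ref{sec04}. This is the shortest path and parallels the $\alpha$-mixing Lemma~\ref{lem1}, though as written it conflates a pointwise bound with what is really an $L^1$ estimate (it is $\mathbb{E}\,\text{ess-sup}_B|\mathbb{P}(B\mid\mathcal{F}_t)-\mathbb{P}(B)|$ that equals $\beta$, not the pointwise sup); the final inequality is nonetheless correct once one reads that step as an $L^1$ bound. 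Your partition argument is more explicit and ties in directly with the total-variation formula for $\beta$ recalled in the preliminaries, at the cost of the approximation bookkeeping you flag. Your coupling fallback via Berbee's lemma is the cleanest conceptually and gives the constant $2$ with no limiting argument, but it imports an external result not stated in the paper. All three approaches are valid; the paper simply chose the conditional-expectation shortcut.
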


\begin{proof}
	The covariance of $X_t$ and $X_s$ is given by
	
	\begin{equation*}
		\mathbb{C}ov(X_t, X_s) = \mathbb{E}[X_t X_s] - \mathbb{E}[X_t] \mathbb{E}[X_s].
	\end{equation*}
	
	\bigskip\noindent We focus on the first term, $\mathbb{E}[X_t X_s]$. By introducing the $\sigma$-algebra $\mathcal{F}_t = \sigma(X_\ell : \ell \leq t)$, we can write
	
	\begin{equation*}
		\mathbb{E}[X_t X_s] = \mathbb{E}[X_t \mathbb{E}[X_s | \mathcal{F}_t]].
	\end{equation*}
	
	\noindent Thus, the covariance becomes
	
	\begin{equation*}
		\mathbb{C}ov(X_t, X_s) = \mathbb{E}[X_t (\mathbb{E}[X_s | \mathcal{F}_t] - \mathbb{E}[X_s])].
	\end{equation*}
	
	\bigskip\noindent The definition of the coefficient $\beta(n)$ implies that for any random variable $X_s \in \mathcal{F}_{t+n}^{\infty}$, we have the bound
	
	\begin{equation*}
		|\mathbb{E}[X_s | \mathcal{F}_t] - \mathbb{E}[X_s]| \leq 2\beta(n) \|X_s\|_\infty,
	\end{equation*}
	
	\noindent where $n = |t - s|$.\\
	
	\noindent By substituting into the previous expression, we obtain
	
	\begin{equation*}
		|\mathbb{C}ov(X_t, X_s)| \leq \|X_t\|_\infty \cdot \mathbb{E}[|\mathbb{E}[X_s | \mathcal{F}_t] - \mathbb{E}[X_s]|].
	\end{equation*}
	
	\bigskip\noindent   Since $|\mathbb{E}[X_s | \mathcal{F}_t] - \mathbb{E}[X_s]| \leq 2\beta(|t - s|)\|X_s\|_\infty$, it follows that
	
	\begin{equation*}
		|\mathbb{C}ov(X_t, X_s)| \leq 2\beta(|t - s|) \|X_t\|_\infty \|X_s\|_\infty.
	\end{equation*}
	
	\bigskip\noindent  Thus, the proof of Theorem \ref{theo02}  is complete.
\end{proof}

\bigskip\noindent The next result states the uniform convergence for empirical functions of $\beta-$mixing stationary sequences.\\

\begin{proposition}\label{prop02}
	Suppose that $X_j's$ form a $\beta-$mixing stationary and square integrable sequence of random variables with mixing condition $\beta$. Then
	
	\begin{equation*}
		\sup_{x \in R} |F_n(x)-F(x)|\rightarrow 0, \text{ almost surely, as } n\rightarrow + \infty
	\end{equation*}
	
	\noindent holds whenever we have, for some $\delta \in ]0;1[$,
	
	\begin{equation*}
		\beta(n)=O\left(n^{-\left(\frac{1+\delta}{1-\delta}\right)} \right). 
	\end{equation*}
\end{proposition}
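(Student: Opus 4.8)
The plan is to deduce the statement from Theorem \ref{thg-gc-ef-01} by verifying its hypotheses \eqref{gcep1} and \eqref{gcep2} for the indicators $\mathbb{I}_{\{X_i\leq x\}}$, mirroring the $\alpha$-mixing argument of Proposition \ref{prop01}. By the remark following Theorem \ref{thg-gc-ef-01}, for a stationary sequence condition \eqref{gcep1} already implies \eqref{gcep2}, so it suffices to bound
\begin{equation*}
\sup_{q\geq 1}\frac{1}{q^{(3-\delta)/2}}\mathbb{V}ar\left(\sum_{i=1}^q \mathbb{I}_{\{X_i\leq x\}}\right)
\end{equation*}
uniformly in $x\in\mathbb{R}$. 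The role played by the $\alpha$-mixing covariance inequality (Theorem \ref{theo1}) in Proposition \ref{prop01} will here be taken by the $\beta$-mixing covariance inequality of Lemma \ref{theo02}.

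First I would expand the variance into its diagonal and off-diagonal parts,
\begin{equation*}
\mathbb{V}ar\left(\sum_{i=1}^q \mathbb{I}_{\{X_i\leq x\}}\right)=\sum_{i=1}^q \mathbb{V}ar\left(\mathbb{I}_{\{X_i\leq x\}}\right)+2\sum_{1\leq i<j\leq q}\mathbb{C}ov\left(\mathbb{I}_{\{X_i\leq x\}},\mathbb{I}_{\{X_j\leq x\}}\right).
\end{equation*}
The diagonal sum is at most $q$ since each indicator has variance $F(x)(1-F(x))\leq 1/4\leq 1$. For the off-diagonal sum I would apply Lemma \ref{theo02} to the bounded random variables $\mathbb{I}_{\{X_i\leq x\}}$ and $\mathbb{I}_{\{X_j\leq x\}}$, whose sup-norms are at most $1$, obtaining
\begin{equation*}
\left|\mathbb{C}ov\left(\mathbb{I}_{\{X_i\leq x\}},\mathbb{I}_{\{X_j\leq x\}}\right)\right|\leq 2\beta(|i-j|).
\end{equation*}

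Next, grouping the off-diagonal sum by the lag $h=j-i$ and using stationarity, it is controlled by $q\sum_{h=1}^{q-1}\beta(h)$. I would split this sum at a threshold $n$ to be chosen, bounding $\beta(h)\leq 1$ for the small lags $h<n$ and $\beta(h)\leq\beta(n)$ for the large lags $h\geq n$ by monotonicity of the mixing coefficient, which yields a bound of the order
\begin{equation*}
\frac{1}{q^{(3-\delta)/2}}\left(qn+q^2\beta(n)\right),
\end{equation*}
exactly matching the two competing terms of the $\alpha$-mixing computation. Choosing $n=\left[q^{(1-\delta)/2}\right]$ makes the first term $O(1)$, while the second becomes $O\left(q^{(1+\delta)/2}\beta(n)\right)$; substituting $q=n^{2/(1-\delta)}$ shows that this is bounded precisely when $\beta(n)=O\left(n^{-(1+\delta)/(1-\delta)}\right)$, which is the stated hypothesis (the restriction $\delta\in\,]0;1[$ being what guarantees $0<(1-\delta)/2<1/2$, so that $n$ grows as a genuine power of $q$ below $\sqrt{q}$).

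The computation is routine once the covariance bound is in place, so the only genuine point requiring care is the first step: justifying that Lemma \ref{theo02}, stated for the coordinate variables $X_t$, applies verbatim to their bounded indicator transforms. This is immediate because the proof of that lemma uses only the conditional-expectation deviation estimate $|\mathbb{E}[Z\mid\mathcal{F}_t]-\mathbb{E}[Z]|\leq 2\beta(|t-s|)\|Z\|_\infty$, which is valid for any bounded $\mathcal{F}_{t+n}^{\infty}$-measurable $Z$, here $Z=\mathbb{I}_{\{X_j\leq x\}}$. The remaining bookkeeping, namely the lag split and the optimisation of $n$, is identical to Proposition \ref{prop01}, with the covariance constant $8$ of Theorem \ref{theo1} replaced by the constant $2$ of Lemma \ref{theo02} and the crude bound $\alpha\leq 1/4$ replaced by $\beta\leq 1$.
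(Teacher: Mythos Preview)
Your proposal is correct and follows essentially the same route as the paper's own proof: both verify condition \eqref{gcep1} of Theorem \ref{thg-gc-ef-01} by expanding the variance, bounding the off-diagonal covariances via Lemma \ref{theo02}, splitting the lag sum at a threshold $n$, and then optimising with $n=[q^{(1-\delta)/2}]$ to recover the stated decay rate on $\beta(n)$. Your additional remark justifying that Lemma \ref{theo02} applies to the indicator transforms is a worthwhile clarification that the paper leaves implicit.
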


\begin{proof}
	We need to verify the condition \eqref{gcep1} in Theorem (\ref{thg-gc-ef-01}, page \pageref{thg-gc-ef-01}) 	
	\begin{eqnarray*}
		\sup_{q \geq 1} \frac{1}{q^{(3 - \delta)/2}} \mathbb{V}ar\left(\sum_{i=1}^q \mathbb{I}_{\left\{X_i \leq x\right\}}\right) &=&\sup_{q\geq 1}\frac{1}{q^{(3-\delta )/2}}\left[ \sum_{i=1}^{q}\mathbb{V}ar\left( \mathbb{I}_{\left\{ X_{i}\leq
			x\right\} }\right)+ \sum_{(i\neq j)=1}^{q}\mathbb{C}ov\left(\mathbb{I}_{\left\{ X_{i}\leq x\right\} },\mathbb{I}_{\left\{ X_{j}\leq x\right\} }\right)\right] \\
		&=&\sup_{q\geq 1}\frac{1}{q^{(3-\delta )/2}}\left[ \sum_{i=1}^{q}\mathbb{V}ar\left( \mathbb{I}_{\left\{ X_{i}\leq
			x\right\} }\right)+ 2\sum_{1\leq i< j\leq q}\mathbb{C}ov\left(\mathbb{I}_{\left\{ X_{i}\leq x\right\} },\mathbb{I}_{\left\{ X_{j}\leq x\right\} }\right)\right] \\
		& \leq& \frac{q}{q^{(3 - \delta)/2}}+\frac{4}{q^{(3 - \delta)/2}} \sum_{1 \leq i \leq q-1,\ j = i+h,\ h< n} \beta(|i-j|)\\
		&&+ \frac{4}{q^{(3 - \delta)/2}} \sum_{1 \leq i \leq q-1 ,\ j = i+h,\ h\geq n} \beta(|i-j|)\text{ by the Lemma \ref{theo02}}\\	
		& \leq& \frac{q}{q^{(3 - \delta)/2}}+\frac{2nq}{q^{(3 - \delta)/2}}+ \frac{2q^2-2nq-q}{q^{(3 - \delta)/2}} \beta(n)\\
		& \sim& \frac{2nq}{q^{(3 - \delta)/2}}+ \frac{2q^2}{q^{(3 - \delta)/2}} \beta(n)<+\infty.
	\end{eqnarray*}
	
	\noindent Finally, the condition reduces to
	
	\begin{equation*}
		n=\left[q^{\left(1- \delta\right)/2} \right] \text{ and }  \beta(n) =O\left(q^{\frac{1+\delta}{2}} \right) =O\left(n^{-\left(\frac{1+\delta}{1-\delta}\right)} \right).
	\end{equation*}
\end{proof} 

\begin{remark}
	In the case of $\beta$-mixing sequences too, we do not need whole function $\beta$. Instead we may fix $x \in \mathbb{R}$ and consider the modulus $\beta(x,n)$ related to the sequence $\mathbb{I}_{]-\infty,x]}(X_i)$'s. It is clear that 
	
	\begin{equation*}
		\forall n\geq 1, \ \forall x \in \mathbb{R}, \ \beta(x,n)\leq \beta(n). 
	\end{equation*}

	\bigskip\noindent We have the Glivenko-Cantelli theorem if and only if
	
	\begin{equation*}
		\forall x \in \mathbb{R}, \ \beta(n)  =O\left(n^{-\left(\frac{1+\delta}{1-\delta}\right)} \right).
	\end{equation*}
	
	\bigskip\noindent The same can be done for any particular class $\mathcal{C}$ by using the sequences $\mathbb{I}_{C}(X_i)$'s and the $\beta(C,n)$ mixing modulus, and get the condition
	
	\begin{equation*}
		\forall x \in \mathcal{C} , \ \beta(C,n)=O\left(n^{-\left(\frac{1+\delta}{1-\delta}\right)} \right).
	\end{equation*}
\end{remark}

\noindent Several authors have previously established Glivenko-Cantelli-type results under dependent sequences, particularly for mixing processes:

\bigskip\noindent (a) \cite{billingsley} – $\phi$-Mixing Condition

\begin{itemize}
	\item	Result: Showed weak convergence of the empirical process for $\phi$-mixing sequences.
	
	\item	Assumption: Required the summability condition 
	$\sum_{k>0} k^2 \sqrt{\phi(k)} < +\infty$ where $\phi(k)$ measures dependence at lag $k$.
	
	\item	Limitation: The $\phi$-mixing condition is often too strong and restrictive in practical applications.
\end{itemize}

\bigskip\noindent (b) \cite{yosh} – $\alpha$-Mixing with Strong Decay Rate
\begin{itemize}
	\item Result: Proved weak convergence of the empirical process for $\alpha$-mixing sequences.
	
	\item Assumption: Required the mixing coefficient decay rate $\alpha(n) = O(n^{-a})$ with $a > 3. $
	
	\item Limitation: The assumption $a>3$ was too restrictive, excluding many common dependent sequences found in applications (e.g., financial data, time series).
\end{itemize}

\bigskip\noindent (c) \cite{shao} – Improved $\alpha$-Mixing Rate
\begin{itemize}
	\item Result: Improved Yoshihara's bound by relaxing the decay rate to $\alpha(n) = O(n^{-a})$  with $a > 2.$ 
	
	\item \cite{shaoyu} further weakened the condition to $a > 1 + \sqrt{2}$.
	
	\item Limitation: The conditions were still somewhat restrictive, especially for practical datasets where mixing coefficients decay slowly.
\end{itemize}

\bigskip\noindent (d) \cite{rio} – Optimal Condition for $\alpha$-Mixing
\begin{itemize}
	\item Result: Established the optimal mixing condition: $\alpha(n) = O(n^{-a})$ with $a > 1.$
	
	\item Significance: This is the best possible result for $\alpha$-mixing sequences, meaning it covers a wider range of dependent processes compared to earlier work.
\end{itemize}

\bigskip\noindent (e) \cite{doukh} – $\beta$-Mixing Condition
\begin{itemize}
	\item Result: Studied $\beta$-mixing sequences and provided invariance principles for empirical processes.
	
	\item	Assumption: Required that $\beta$-mixing coefficients decay at a rate satisfying: $\sum_{n=1}^{\infty} \beta(n) < +\infty.$
	
	\item Limitation: This assumption is very strong, as many real-world processes (e.g., financial time series) do not satisfy it.
\end{itemize}

\bigskip\noindent Our results relaxe the conditions on $\alpha$-mixing and $\beta$-mixing sequences by 

\bigskip allowing slower mixing coefficient decay :
\begin{itemize}
	\item instead of requiring $\alpha(n) = O(n^{-a})$ with $a > 1$, it provides a more flexible condition: $\alpha(n) = O\left(n^{-\left(\frac{1+\delta}{1-\delta}\right)}\right)$ for some $0 < \delta < 1$;
	
	\item this means that slower decaying $\alpha$-mixing sequences (which previous results excluded) can now satisfy the Glivenko-Cantelli theorem.
\end{itemize}

\bigskip extending results to functional empirical processes :
\begin{itemize}
	\item previous results mostly focused on indicator functions (empirical distribution functions);
	\item this paper generalizes it to real-valued empirical functions, making the results more applicable in statistical learning and machine learning.
\end{itemize}

\bigskip generalizing to arbitrary stationary sequences :
\begin{itemize}
	\item instead of requiring specific mixing rates, the paper provides a general criterion based on entropy conditions and Vapnik-Chervonenkis (VC) classes, making it applicable to a broader class of dependent processes.
\end{itemize}

\section{Conclusion} \label{sec07}
\noindent We have extended the classical Glivenko-Cantelli theorem to real-valued empirical function classes under $\alpha$-mixing and $\beta$-mixing conditions. Our results provide finer sufficient conditions on the mixing coefficients for uniform convergence and establish deviation bounds that account for dependence structures. This paper advances the empirical process theory by bridging the gap between iid settings and dependent data scenarios. It strengthens the theoretical foundation for statistical learning and inference under dependence, refining the conditions for uniform convergence in empirical processes. Future research may explore tighter bounds and applications to high-dimensional settings.

\bigskip\noindent\textbf{Acknowledgement.} The authors thank Professor Gane Samb LO for his expertise, availability and generosity in sharing his knowledge, which were a source of inspiration and enrichment for this paper.

\end{document}